\algnewcommand\algorithmicinput{\textbf{INPUT:}}
\algnewcommand\INPUT{\item[\algorithmicinput]}
\algnewcommand\algorithmicoutput{\textbf{OUTPUT:}}
\algnewcommand\OUTPUT{\item[\algorithmicoutput]}
\DeclareMathOperator*{\argmin}{arg\,min}
\newtheorem{model}{Model}
\newcommand{\eps}{\varepsilon}
\newcommand{\s}{{\mathfrak{s} }}
\newcommand{\N}{{\mathfrak{N} }}
\newcommand{\op}{{\text{op} }}
\newcommand{\kse}{{\kappa_{\max}^{s,e}}}
\begin{document}

\title{Statistically and Computationally Efficient Change Point Localization in Regression Settings}

\author{\name Daren Wang \email dwang24@nd.edu \\
       \addr Department of ACMS\\
       University of Notre Dame
\\Indiana, USA
       \AND
       \name Zifeng Zhao \email zifeng.zhao@nd.edu \\
       \addr Mendoza College of Business\\
        University of Notre Dame
\\Indiana, USA     
       \AND
	   \name Kevin Z. Lin \email kevinl1@wharton.upenn.edu \\	
	   \addr Department of Statistics and Data Science  \\
	   University of Pennsylvania\\
	   Pennsylvania, USA
	   \AND Rebecca Willett \email willett@uchicago.edu \\
	   \addr Department of Statistics  \\
	   University of Chicago\\
	     Illinois, USA}
\editor{Zaid Harchaoui} 
\maketitle

\begin{abstract}
Detecting when the underlying distribution changes for the observed time series is a fundamental problem arising in a broad spectrum of applications. In this paper, we study multiple change-point localization in the high-dimensional regression setting, which is particularly challenging as no direct observations of the parameter of interest is available. Specifically, we assume we observe $\{ x_t, y_t\}_{t=1}^n$ where $ \{ x_t\}_{t=1}^n $ are $p$-dimensional covariates, $\{y_t\}_{t=1}^n$ are the univariate responses satisfying $\mathbb{E}(y_t) = x_t^\top \beta_t^*  \text{ for } 1\le t \le n $ and $\{\beta_t^*\}_{t=1}^n $ are the  unobserved regression coefficients that change over time in a piecewise constant manner. We propose a novel projection-based algorithm, Variance Projected Wild Binary Segmentation~(VPWBS), which transforms the original (difficult) problem of change-point detection in $p$-dimensional regression to a simpler problem of change-point detection in mean of a one-dimensional time series. VPWBS is shown to achieve sharp localization rate $O_p(1/n)$ up to a log factor, a significant improvement from the best rate $O_p(1/\sqrt{n})$ known in the existing literature for multiple change-point localization in high-dimensional regression. Extensive numerical experiments are conducted to demonstrate the robust and favorable performance of VPWBS over two state-of-the-art algorithms, especially when the size of change in the regression coefficients $\{\beta_t^*\}_{t=1}^n $ is small.
\end{abstract}

\
\\
\begin{keywords} Change-point detection; High-dimensional  regression; CUSUM statistics; Wild binary segmentation; Time series analysis 
\end{keywords}

\section{Introduction}\label{sec:intro}
Change-point detection and localization is a classical problem in time series analysis, in which we record a series of measurements and wish to determine whether and at what time(s) the underlying generative model has changed. Due to its flexibility, the model of a time series with multiple structural changes has a wide range of applications including econometrics [\cite{bai1998estimating}], epidemiology [\cite{Jiang2021}], stock price analysis [\cite{chen1997testing}], Internet security monitoring  [\cite{peng2004proactively}], and genetics [\cite{castro2018model,Zhao2021}]. 

Change-point detection is mostly studied and well understood in the mean change-point model, where we typically assume we observe a time series $\{y_t\}_{t=1}^n \subset  \mathbb R^p$ such that 
$$y_t = \beta_t^* +\varepsilon_t , \text{ for all }  1 \le t \le n.$$ 
Here $\{\varepsilon_t\}_{t=1}^n$ are independently and identically distributed measurement noise with mean zero and $\{\beta_t^*\}_{t=1}^n $ are the population mean  vectors that change over time in a piecewise constant manner. The important task is to determine whether and where the structural changes of $\{\beta_t^*\}_{t=1}^n$ take place. There is a vast literature of change-point detection in mean for both low and high dimensions, see for example \cite{frick2014multiscale}, \cite{cho2015multiple}, \cite{cho2016change}, \cite{Yau2016} and \cite{wang2018high}. More recently, \cite{pein2017heterogeneous} introduced a method  that can handle  mean and variance changes simultaneously. \cite{cribben2017estimating}, \cite{wang2021optimal} and \cite{Zhao2019}, among others, investigated the mean change-point problem for the dynamic Bernoulli network models. \cite{enikeeva2019high}   studied the optimal  change point detection boundary in the high-dimensional settings. \cite{xie2020sequential} considered online monitoring    change point detection for streaming data in high dimensions. 

However, in some other practical settings , we can only obtain indirect measurements of the (potentially high-dimensional) vectors $\{ \beta_t^*\}_{t=1}^n$. Specifically, in this paper, we consider change-point detection in high-dimensional linear regression. We assume we observe the time series $\{ x_t, y_t\}_{t=1}^n$, where $ \{ x_t\}_{t=1}^n $ are $p$-dimensional covariates, $\{y_t\}_{t=1}^n$ are the univariate responses satisfying $\mathbb E(y_t|x_t) = x_t^\top \beta_t^* \text{ for all } 1\le t \le n $ and $\{\beta_t^*\}_{t=1}^n $ are the unobserved regression coefficients that potentially change over time. We formally summarize the model as follows.

\begin{model}[Change-point model in the regression setting]\label{assume:change-point regression model}
Suppose for $1\le t \le n$, the random covariate $x_t \in \mathbb R^p$ and response $y_t \in \mathbb R $ satisfy 
\begin{align}\label{eq:change-point regression model}
y_t =x_t^\top \beta_t^* +\eps_t,
\end{align}
where the noise $\eps_t \overset{i.i.d.}{\sim} \mathcal{N} (0,\sigma_\varepsilon^2)$ and is independent of the covariate $x_t\overset{i.i.d.}{\sim} \mathcal{N} (0,\Sigma)$\footnote{We assume for convenience that $\eps_t$ and $x_t$ are normally distributed. However our results remain valid as long as $\eps_t$ and $x_t$ are i.i.d. sub-Gaussian random variables.}. In addition, there exist $K\geq 0$ change-points $\{\eta_{k}\}_{k=1}^{K} \subset \{1,\ldots,n-1 \}$ such that 
$$\beta_t^* =\beta_{t'}^* \text{\quad  if \ }  \eta_{k-1}+1 \leq t \le t'\leq  \eta_k, \text{\quad for all \ } k=1,\cdots, K+1,$$
where by convention we define $\eta_0=0$ and $\eta_{K+1}=n$.
\end{model}

\textbf{Notation}: Before we proceed, for clarity of presentation, we first introduce necessary notation used throughout the paper. For two positive sequences $\{a_n\}_{n=1}^\infty$ and $\{b_n\}_{n=1}^\infty$, we write $a_n=O(b_n) $ if there exists $C>0$ such that $\lim\sup_{n\to \infty} a_n/b_n <C $ and write $a_n \asymp b_n $ or $a_n =\Theta(b_n)$ if $a_n =O(b_n)$ and $b_n = O(a_n)$. We write $a_n \succeq b_n$ if $\lim\inf_{n\to \infty} a_n/b_n =\infty.$ Let $\{x_n\}_{n=1}^\infty$ be a sequence of random variables. We write $x_n=O_p(b_n)$ if $x_n/b_n=O_p(1)$ and write $x_n=o_p(b_n)$ if $x_n/b_n=o_p(1)$, where $O_p(1)$ and $o_p(1)$ follow the standard probability notation of big $O$~(stochastic boundedness) and small $o$~(convergence to zero in probability). For a vector $\beta\in\mathbb{R}^p$, denote $\|\beta\|_2=\sqrt{\sum_{i=1}^p{\beta_i^2}}$ as its $l_2$ norm, denote $\|\beta\|_\infty=\max_{1\leq i\leq p}|\beta_i|$ as its $l_\infty$ norm, and denote $\|\beta\|_0=\sum_{i=1}^p\mathbb{I}(\beta_i\neq 0)$ as its $l_0$ norm, where $\beta_i$ denotes the $i$th element of $\beta$ and $\mathbb{I}$ is the indicator function. Given two natural numbers $s<e$, for simplicity, with a slight abuse of notation, we denote $[s,e]:=\{t\in\mathbb{N}|s\leq t\leq e\}$ and denote $(s,e]:=\{t\in\mathbb{N}|s< t\leq e\}$. In other words, $[s,e]$ contains all natural numbers from $s$ to $e$~(inclusive) and $(s,e]$ contains all natural numbers from $s+1$ to $e$~(inclusive). Throughout the paper, we use $c$ and $C$ to denote generic absolute constants independent of $n$ and $p$, and the value of $c$ and $C$ may vary from place to place.

For change-point detection in Model \eqref{assume:change-point regression model}, the key task is to estimate the unknown $\{\eta_{k}\}_{k=1}^{K}$. For any change-point estimator $\{ \widehat \eta_k\}_{k=1}^{K'}$, we say it is consistent if,  with probability approaching 1, $K'=K $ and the sup-norm error satisfies
\begin{align}
\label{eq:sup-norm}
\epsilon :=\max_{1\le k \le K}   \frac{| \widehat \eta_k -\eta_k |}{n} =o_p(1).
\end{align}
for all sufficiently large $n$.  

In the literature, change-point detection for low-dimensional $(p\ll n)$ linear regression models has been extensively studied by many authors including \cite{bai1998estimating}, \cite{qu2007estimating}, and more recently \cite{zhang2015multiple}. Most of the existing works in this setting focus on the case where the number of change-points, $K$, is a fixed constant.

Change-point detection for the high-dimensional linear regression model where $p\gg n$, has also received recent attention. In particular, \cite{lee2016lasso} extended Lasso to the high-dimensional single change-point setting and showed that both the change-point and the regression parameters $\{\beta_t^*\}_{t=1}^n$ can be consistently estimated. Later, \cite{lee2018oracle} extended their results to the high-dimensional quantile regression model. \cite{kaul2018parameter} proposed a highly efficient algorithm for the setting  of exactly one change-point. Both \cite{lee2016lasso} and \cite{kaul2018parameter} showed that in the \emph{single} change-point setting,  the change-point  can be estimated with sup-norm error satisfying $\epsilon= O_p(1/n)$. \cite{zhang2015change} studied the Sparse Group Lasso (SGL) algorithm for the multiple change-points setting. The authors  showed that SGL returns consistent change-point estimators  with $\epsilon=o_p(1)$ when the number of change-points $K$ is bounded. \cite{leonardi2016computationally} showed that, by using a binary  search algorithm, consistent estimation can be achieved with $\epsilon=O_p(1/\sqrt n)$ even when the number of change-points $K$ diverges as $n\to \infty$.

In this paper, we focus on the high-dimensional regime~($p\gg n$) and propose a computationally efficient algorithm that can consistently estimate the unknown multiple change-points at the minimax optimal localization rate $O_p(1/n)$ up to a log factor. To the best of our knowledge, no other method in the literature can achieve this rate for multiple change-points estimation under such setting. We refer to more detailed discussion of our contribution at the end of this section.

We proceed by imposing some mild general assumptions on the high-dimensional regression setting in Model \eqref{assume:change-point regression model} and define key quantities that are used to quantify the localization error rate and requirements on the signal-to-noise ratio~(SNR) of various change-point estimation methods for Model \eqref{assume:change-point regression model}.

\begin{assumption}\label{assume: model assumption}
	\
	\\
	{\bf a.} [Design matrix] There exist absolute positive constants $c_x$ and $C_x$ such that the minimal and maximal eigenvalues of the covariance matrix $\Sigma$ satisfy $\Lambda_{\min}(\Sigma)\ge c_x$ and $\Lambda_{\max} (\Sigma ) \le C_x$.
	\\
	{\bf b.} [Sparse support] There exist a collection of subsets $ \{ S_k  \}_{k=1}^{K+1}  \subset \mathbb  R^p $ such that, for all $k=1,\cdots, K+1$,
	$$\beta^*_{t,j} =0    \text{ if } \eta_{k-1}+1\le t \le \eta_{k+1} \text{ and } j \not \in S_k.$$
	In addition, the size of the support satisfies $\max_{1\le k \le K+1 }|S_k|\le  \mathfrak s$ and there exists an absolute constant $C_\beta$ such that $\max_{1\le t \le n }\| \beta_t^*\|_\infty \le C_\beta<\infty.$  
	 
\end{assumption}

\textbf{Key quantities}: Define $\mathfrak{N}:=\max_{1\le t\le n }\|\beta_t^*\|_2^2$. 
By Assumption \ref{assume: model assumption}{\bf b}, we have $\mathfrak{N}\le C_\beta^2 \s$. Moreover, by Assumption \ref{assume: model assumption}{\bf a}, we have $\text{Var}(y_t)=\beta_t^{*\top}\Sigma \beta_t^*+\sigma_\varepsilon^2 >c_x \|\beta_t^*\|_2^2$, and thus $\N < \max_{1\leq t\leq n} \text{Var}(y_t)/c_x$. For $k=1,\cdots,K+1$, denote $\Delta_k=\eta_{k}-\eta_{k-1}$ as the spacing between two consecutive change-points and define $\Delta=\min_{1\le k \le K+1} \Delta_k$ as the minimum spacing. In addition, for $k=1,\cdots K$, denote $\kappa_k=\left\|\beta^*_{\eta_{k} +1} - \beta_{\eta_k } ^*\right\|_2$ as the $l_2$-norm of the change in regression coefficients and define $\kappa=\min_{1\le k \le K} \kappa_k$ as the minimum change size. Intuitively, the difficulty of change-point detection for Model \ref{assume:change-point regression model} depends on the interplay among $K$, $\kappa$, $\Delta$, $\mathfrak{s}$, the dimension $p$ and the sample size $n$.

We remark that our later theoretical analysis allows the number of change-points $K$, the minimum change size $\kappa$, the minimum spacing $\Delta$, the sparsity $\mathfrak{s}$ and the dimension $p$ to vary with the sample size $n$. To our best knowledge, this is among the most flexible frameworks in the literature.

\textbf{Our contributions}: For change-point estimation in the high-dimensional regression model, we propose a novel two-stage detection procedure named Variance Projected Wild Binary Segmentation~(VPWBS). Given the observations $\{x_t, y_t\}_{t=1}^n$, in Stage 1, VPWBS estimates the regression coefficients $\{\beta_t^*\}_{t=1}^n$ using a group Lasso based local screening algorithm carefully tailored for the high-dimensional regression change-point setting. In Stage 2, via a novel projection step, VPWBS projects the high-dimensional regression data $\{x_t, y_t\}_{t=1}^n$ into a one-dimensional time series, where the optimal projection direction is derived from the estimated $\{\widehat \beta_t\}_{t=1}^n$ in Stage 1. Subsequently, VPWBS achieves change-point estimation by performing mean change detection via CUSUM statistics on the resulting one-dimensional time series.

Our theoretical analysis shows that VPWBS can achieve consistent estimation even when the number of change-points $K$ diverges as $n\to \infty$. Furthermore, the sup-norm error $\epsilon$ (defined in \eqref{eq:sup-norm}) of the VPWBS change-point estimator is, up to a log factor, of order $O_p(1/n)$, which is the known minimax optimal rate. To the best of our knowledge, this is a significant improvement for \textit{multiple} change-point estimation in the high-dimensional regression setting, as the aforementioned existing literature can only achieve $\epsilon= O_p(1/\sqrt {n} )$ at best. A key step of VPWBS is the estimation of an optimal projection direction. In the theoretical analysis, we establish error bounds on the estimated high-dimensional projection direction, which may be of independent interest. VPWBS admits a reasonable computational cost of order $O(n(\log(n))^2 \cdot \text{GroupLasso}(n,p))$, which enables its implementation in the high-dimensional regression setting. Here $\text{GroupLasso}(n,p)$ denotes the computational cost of the group Lasso for a $p$-dimensional regression with $n$ samples. Similar definition applies to $\text{Lasso}(n,p)$. We summarize the localization error bound and computational cost of VPWBS and two other state-of-the-art methods in \Cref{table for comparison} and refer more detailed discussions to Sections \ref{sec:vpwbs} and \ref{sec:sim}.
 
\begin{table}[H] 
	\renewcommand{\arraystretch}{1.5}
	\begin{center}
		\begin{tabular}{l | c | c | c | c }  
			& Localization  Error Bound $\epsilon$ &  SNR Condition &   Computational Complexity  \\
			\hline
			VPWBS & $O_p\big (    \mathfrak{N }\log(n)   n^{-1  }   \big  )$   &  $\Delta \kappa^2 \succeq     \s     \log(pn) $   & $O(n(\log(n))^2 \cdot \text{GroupLasso } ( n,p )  ) $ 
			\\
			\hline
			EBSA  &  $O_p\big (     \s     \log(p)  n^{-1/2 }   \big  )$       &    $\Delta \kappa^2 \succeq    \mathfrak{N }   \s     \log(p) $&  $O(n\log(n) \cdot \text{Lasso } ( n,p )  ) $ \\
			\hline 
			SGL      & $ o_p(1)$   &    $\Delta \kappa^2  \asymp n $  & $O(\text{Lasso} (n,np))$\\
			\hline 
		\end{tabular}
		\caption{SGL~\citep{zhang2015multiple} and EBSA~\citep{leonardi2016computationally} are two state-of-the-art methods developed for change-point estimation in high-dimensional regression. Recall $\mathfrak{N}\le C_\beta^2 \s$ and we refer the detailed definition of notation $n,p,\Delta,\kappa,\s,\mathfrak{N}$ to Assumption \ref{assume: model assumption} and the discussion on key quantities.}
		\label{table for comparison}
	\end{center} 
\end{table}

The rest of the paper is organized as follows. In Section \ref{sec:general_framework}, we introduce the projection based change-point estimation framework and a group Lasso based local screening algorithm for the estimation of the optimal projection direction. Building upon wild binary segmentation, Section \ref{sec:vpwbs} proposes the VPWBS for multiple change-point estimation in high-dimensional regression and further establishes its optimal theoretical properties. Extensive numerical experiments are conducted in Section \ref{sec:sim} to demonstrate the promising performance of VPWBS when compared with state-of-the-art methods in the literature. Section \ref{sec:discussion} concludes with a discussion. Technical proofs can be found in the supplementary material.


\section{A General Framework and Group Lasso Based Screening}\label{sec:general_framework}
In this section, we introduce the general framework of the proposed change-point estimation procedure for the high-dimensional regression problem in Model \eqref{assume:change-point regression model}. Specifically, Section \ref{subsec:projection} discusses the essential idea of a projection based change-point detection framework and Section \ref{subsec:group_lasso} proposes a group Lasso based screening algorithm for estimating the unknown projection direction.

\subsection{A projection based change-point estimation framework}\label{subsec:projection}
To ease presentation, we start the discussion with the problem of single change-point estimation. Specifically, given a sample of high-dimensional regression $\{x_t,y_t\}_{t=1}^n$ with $y_t=x_t^\top \beta_t^*+\varepsilon_t$, assume there is a single change-point at an unknown time point $\eta$ such that
$$\beta_t^*=\beta^{(1)} \text{ for } 1\leq t\leq \eta \text{ and } \beta_t^*=\beta^{(2)} \text{ for } \eta+1\leq t\leq n.$$ To detect the existence of $\eta$ and further estimate its location, we need to measure and test the difference between the unknown regression coefficients $\beta^{(1)}$ and $\beta^{(2)}$.

For two regression coefficients $\beta^{(1)}$ and $\beta^{(2)}$, it is natural to directly measure their difference via the $l_2$-norm $\|\beta^{(1)}-\beta^{(2)}\|_2^2.$ However, under the regression context, an arguably more relevant alternative is ${(\beta^{(1)}-\beta^{(2)})^\top \Sigma (\beta^{(1)}-\beta^{(2)})}$, which equals to ${\text{Var}(x_t^\top(\beta^{(1)}-\beta^{(2)}))}$ as $\text{Var}(x_t)=\Sigma$. Note that under Assumption \ref{assume: model assumption}\textbf{a}, we have that
$$c_x\|\beta^{(1)}-\beta^{(2)}\|_2^2 \leq {(\beta^{(1)}-\beta^{(2)})^\top \Sigma (\beta^{(1)}-\beta^{(2)})} \leq C_x \|\beta^{(1)}-\beta^{(2)}\|_2^2.$$
Thus, in terms of theoretical magnitude, $\|\beta^{(1)}-\beta^{(2)}\|_2^2$ and ${(\beta^{(1)}-\beta^{(2)})^\top \Sigma (\beta^{(1)}-\beta^{(2)})}$ are the same and both can capture the change in the regression coefficient. However, compared to $\|\beta^{(1)}-\beta^{(2)}\|_2^2$, the quantity ${(\beta^{(1)}-\beta^{(2)})^\top \Sigma (\beta^{(1)}-\beta^{(2)})}$ further incorporates the covariance structure $\Sigma$ of the covariates and thus can better reflect the difference between two regression models $y=x^\top \beta^{(1)}+\varepsilon$ and $y=x^\top \beta^{(2)}+\varepsilon$. We therefore prefer ${(\beta^{(1)}-\beta^{(2)})^\top \Sigma (\beta^{(1)}-\beta^{(2)})}$ for change-point estimation. We remark that ${(\beta^{(1)}-\beta^{(2)})^\top \Sigma (\beta^{(1)}-\beta^{(2)})}$ is closely related to the explained variance in the regression literature, see for example \cite{Cai2020}.

For any $1\leq m\leq n-1$, define $\beta^{(1)}_m=\sum_{t=1}^{m}\beta_t^*/m$ and $\beta^{(2)}_m=\sum_{t=m+1}^{n}\beta_t^*/(n-m)$. Note that $\beta^{(1)}_m$ and $\beta^{(2)}_m$ are the unique minimizer of the population squared loss function $\mathbb{E}(\sum_{t=1}^{m}(y_t-x_t^\top \beta)^2)$ and $\mathbb{E}(\sum_{t=m+1}^{n}(y_t-x_t^\top \beta)^2)$, respectively. As a function of $m$, ${(\beta_m^{(1)}-\beta_m^{(2)})^\top \Sigma (\beta_m^{(1)}-\beta_m^{(2)})}$ achieves its maximum at the true change-point $m=\eta$ due to the fact that  $\beta_m^{(1)}-\beta_m^{(2)}= \min(\frac{\eta}{m},\frac{n-\eta}{n-m})(\beta^{(1)}-\beta^{(2)})$. Thus, the sample estimate of ${(\beta_m^{(1)}-\beta_m^{(2)})^\top \Sigma (\beta_m^{(1)}-\beta_m^{(2)})}$ can be valuable for the detection and estimation of $\eta.$

Given a time point $m$, to estimate ${(\beta_m^{(1)}-\beta_m^{(2)})^\top \Sigma (\beta_m^{(1)}-\beta_m^{(2)})}$, a natural choice is the plug-in estimator. Specifically, via a penalized M-estimator, we can obtain $\widehat{\beta}^{(1)}_m$ from $\{x_t,y_t\}_{t=1}^{m}$ and $\widehat{\beta}^{(2)}_m$ from $\{x_t,y_t\}_{t=m+1}^{n}$. Combined with a covariance matrix estimator $\widehat{\Sigma}$, the plug-in estimator takes the form $(\widehat{\beta}^{(1)}_m-\widehat{\beta}^{(2)}_m)^\top \widehat{\Sigma}(\widehat{\beta}^{(1)}_m-\widehat{\beta}^{(2)}_m)$. This in some sense resembles the classical Wald-type statistics used in the change-point literature, see for example \cite{Davis1995} and \cite{Huskova2007}. However, the plug-in estimator requires the estimation of $\Sigma$. Without strong structural assumptions on $\Sigma$, this is known to be a difficult task in high dimensions.

To bypass this difficulty, we slightly alter the estimation target and propose an alternative estimator via projection. Specifically, given a $p$-dimensional unit vector $u$ with $\|u\|_2=1$, we define the \textit{one-dimensional} variance-projected time series $\{z_t(u)\}_{t=1}^n$ as
$$z_t(u)=u^\top x_ty_t, \text{ for }t=1,\cdots,n.$$
A key observation is that $\{z_t(u)\}_{t=1}^n$ has a single change-point in mean at time point $\eta$ as long as $u^\top \Sigma (\beta^{(1)}-\beta^{(2)})\neq 0$. Importantly, if $u=(\beta^{(1)}-\beta^{(2)})/\|\beta^{(1)}-\beta^{(2)}\|_2$, we have that
\begin{align}\label{eq:projection_mean}
    &\mathbb{E}\left(\frac{1}{m}\sum_{t=1}^{m}z_t(u)-\frac{1}{n-m}\sum_{t=m+1}^{n}z_t(u)\right)\nonumber=u^\top \Sigma (\beta^{(1)}_m-\beta^{(2)}_m)\\=&\min(\frac{\eta}{m},\frac{n-\eta}{n-m})\frac{(\beta^{(1)}-\beta^{(2)})^\top \Sigma (\beta^{(1)}-\beta^{(2)})}{\|\beta^{(1)}-\beta^{(2)}\|_2},
\end{align}
which is proportional to the key quantity $(\beta^{(1)}-\beta^{(2)})^\top \Sigma (\beta^{(1)}-\beta^{(2)})$ and also achieves its maximum at $m=\eta.$ Note that we further have
$$\eqref{eq:projection_mean}\geq \min(\frac{\eta}{m},\frac{n-\eta}{n-m}) c_x\|\beta^{(1)}-\beta^{(2)}\|_2.$$
Thus, the projection direction $u=(\beta^{(1)}-\beta^{(2)})/\|\beta^{(1)}-\beta^{(2)}\|_2$ is optimal in the sense that it preserves the \textit{original} change size $\|\beta^{(1)}-\beta^{(2)}\|_2$ of the regression coefficients. Therefore, if the projection direction $u$ is reasonably aligned with $\beta^{(1)}-\beta^{(2)}$, we can efficiently detect and estimate the change-point $\eta$ by performing change-point estimation in mean on the univariate time series $\{z_t(u)\}_{t=1}^n$. To estimate the optimal projection direction, in Section \ref{subsec:group_lasso}, we propose a group Lasso based local screening~(LGS) algorithm which provides an estimated $\widehat{\beta}^{(1)}-\widehat{\beta}^{(2)}$.

Note that the above projection framework loses its intuition and becomes less effective when $\{x_t,y_t\}_{t=1}^n$ contains multiple change-points. To tackle this issue, in Section \ref{sec:vpwbs}, we further combine the projection idea with the wild binary segmentation~(WBS) in \cite{fryzlewicz2014wild} and propose a multiple change-point detection algorithm named variance-projected WBS~(VPWBS). Roughly speaking, the strategy is to perform the projection based change-point detection for $\{x_t,y_t\}_{t=1}^n$ on many randomly generated intervals $\{(a_m,b_m]\}_{m=1}^M$ with $1\leq a_m+1<b_m\leq n$, instead of focusing on the whole sample on $(0,n]$. The hope is that for a sufficiently large $M$, some random intervals will contain only one change-point and the projection based detection method will succeed.

\textbf{An illustrative example}: To facilitate understanding, we provide an illustrative example of how VPWBS works in practice. Specifically, we generate the data $\{x_t,y_t\}_{t=1}^n$ according to simulation setting (i) in Section \ref{subsec:simu_results}, where we have $n=300, p=100$ and there are two change-points of $\{\beta_t^*\}_{t=1}^{n}$ at $\eta_1=100$ and $\eta_2=200$ with change size $\kappa= 1.6\sqrt {40}$. For illustration, we focus on one of the randomly generated intervals $(104, 290]$, which contains a single change-point at $\eta_2=200$. Figure \ref{figure:projected}(a)-(b) plots the subsample observations $\{x_t,y_t\}_{t=105}^{290}$, where no clear pattern of changes can be seen. Based on the above discussion, the optimal projection direction is $$u^*=(\beta^*_{\eta_2}-\beta^*_{\eta_2+1})/\|\beta^*_{\eta_2}-\beta^*_{\eta_2+1}\|_2=(\underbrace{1, -1,1,-1\ldots, -1}_{10}, \underbrace{0, \ldots, 0}_{90})/\sqrt{10}.$$ Figure \ref{figure:projected}(c) plots the projected univariate time series $\{z_t(u^*)=u^{*\top}x_ty_t\}_{t=105}^{290}$ and its one-dimensional CUSUM statistics~(see definition in \eqref{eq: 1D cusum} later). Note that there is a clear pattern of mean change for $\{z_t(u^*)\}_{t=105}^{290}$ around the true change-point $\eta_2=200$ and the CUSUM statistics is indeed maximized at $t=200$. Figure \ref{figure:projected}(d) plots the projected univariate time series $\{z_t(\widehat{u})=\widehat{u}^{\top}x_ty_t\}_{t=105}^{290}$ and its CUSUM statistics, where $\widehat{u}$ is estimated by the LGS algorithm in Section \ref{subsec:group_lasso} using $\{x_t,y_t\}_{t=105}^{290}$  As can be seen, Figure \ref{figure:projected}(d) closely resembles Figure \ref{figure:projected}(c) and thus confirms the success of the proposed projection based change-point estimation framework.

\begin{figure}[h]
	\begin{center}
		\includegraphics[scale=0.45]{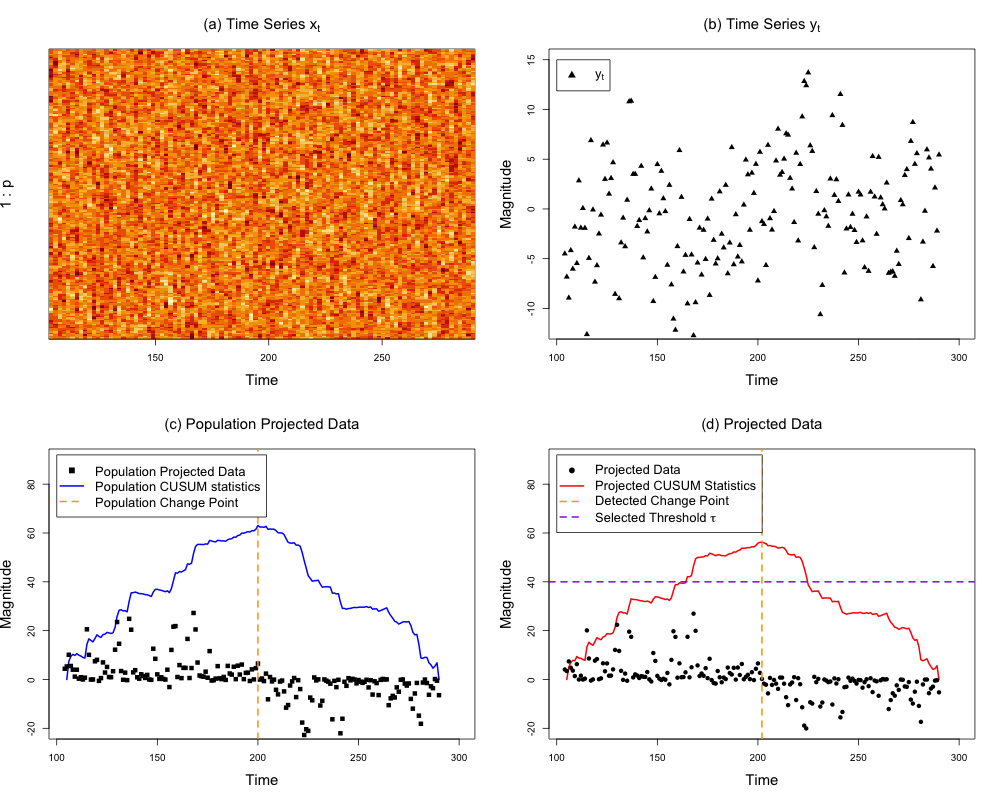} 
		\caption{Plots of (a) $\{x_t\}_{t=105}^{290}$ (b) $\{y_t\}_{t=105}^{290}$ (c) $\{z_t(u^*)\}_{t=105}^{290}$ and its CUSUM statistics (d) $\{z_t(\widehat{u})\}_{t=105}^{290}$ and its CUSUM statistics.
		} \label{figure:projected}
	\end{center}
\end{figure}

\begin{remark}
	Given the estimated $p$-dimensional vector $\widehat{\beta}^{(1)}-\widehat{\beta}^{(2)}$, an intuitive and tempting alternative option is to detect change-points directly based on $\|\widehat{\beta}^{(1)}-\widehat{\beta}^{(2)}\|_2^2$. However, we remark that the extra projection step in our proposed framework helps further turn (and simplify) the $p$-dimensional problem into one-dimensional change-point detection in mean. This projection step acts as a refinement and is essential for the proposed method to achieve the minimax optimal rate~(up to a log factor). In comparison, estimation error may accumulate along the $p$ coordinates for $\|\widehat{\beta}^{(1)}-\widehat{\beta}^{(2)}\|_2^2$, making its theoretical analysis much more challenging. See \cite{wang2018high} for a similar observation in change-point detection for high-dimensional mean.
\end{remark}

\subsection{Local Group Lasso Screening~(LGS)}\label{subsec:group_lasso}
In this section, we propose a local group Lasso based screening~(LGS) algorithm for estimating the optimal projection direction given the observed high-dimensional regression $\{x_t,y_t\}_{t=1}^n$. 

Specifically, denote $1\leq s+1<e\leq n$ as the subsample index, LGS performs a variant of the group Lasso on the subsample $\{x_t,y_t\}_{t=s+1}^e$ and computes
\begin{align}  \label{eq:group lasso algorithm} 
	\begin{split}
		\left(\widehat{\alpha}_1, \widehat{\alpha }_2, \widehat{ \nu } \right) \leftarrow 
		\argmin_{\substack{\nu \in [s'+1, e'-1], \\ \alpha_1, \alpha_2 \in \mathbb{R}^{p}}} \Bigg\{&\sum_{t=s+1}^{\nu} (y_{t} - x_t ^{\top} \alpha_1 \bigr )^2  
		+  \sum_{t=\nu+1}^{e}(y_t-x_t^\top \alpha_2)^2  \\
		+& \lambda \sum_{i=1}^p \sqrt{(\nu-s)(\alpha_{1,i})^2 + (e-\nu)(\alpha_{2,i})^2}\Bigg\},
	\end{split} 
\end{align}
where $s'$ and $e'$ serve as boundary trimming parameters with $s+1\leq s'+1< e'\leq e$, and $\lambda$ is the tuning parameter for the group penalty. In the following, for convenience, we set $s'=s+\lfloor (e-s)/10\rfloor$ and $e'= e-\lfloor(e-s)/10\rfloor$, and summarize the detailed implementation of LGS in Algorithm \ref{alg:LGS}.

\begin{algorithm}[H]
	\begin{algorithmic}
		\INPUT data $\{x_t, y_t\}_{t=1}^{ n }$, subsample index $(s,e]$, tuning parameter $\lambda$.
		\State Set $s'=s+\lfloor (e-s)/10\rfloor$ and $e'= e-\lfloor(e-s)/10\rfloor$ and compute
		\begin{align*}  
			\begin{split}
				\left(\widehat{\alpha}_1, \widehat{\alpha }_2, \widehat{ \nu } \right) \leftarrow 
				\argmin_{\substack{\nu \in [s'+1, e'-1], \\ \alpha_1, \alpha_2 \in \mathbb{R}^{p}}} \Bigg\{&\sum_{t=s+1}^{\nu} (y_{t} - 	x_t ^{\top} \alpha_1 \bigr )^2  
				+  \sum_{t=\nu+1}^{e}(y_t-x_t^\top \alpha_2)^2  \\
				+& \lambda \sum_{i=1}^p \sqrt{(\nu-s)(\alpha_{1,i})^2 + (e-\nu)(\alpha_{2,i})^2}\Bigg\}.
			\end{split} 
		\end{align*}
		\OUTPUT $\{\widehat{\alpha}_1, \widehat{\alpha }_2, \widehat{\nu } \}$.
		\caption{Local group Lasso based Screening. LGS $(\{x_t, y_t\}_{t=1}^{n}, (s,e],\lambda) $.  } 
		\label{alg:LGS} 
	\end{algorithmic}
\end{algorithm}

The proposed LGS algorithm in \eqref{eq:group lasso algorithm} is different from the classical group Lasso or sparse group Lasso, as LGS explicitly targets the single change-point alternative in its formulation by incorporating two separate regression coefficients $\alpha_1$ and $\alpha_2.$ Intuitively, when the subsample is sufficiently large w.r.t.\ the signal-to-noise ratio~(SNR) and contains only one change-point $\eta$, the output $\widehat{\alpha}_1-\widehat{\alpha}_2$ of LGS can estimate the optimal projection direction $\beta_{\eta}^*-\beta_{\eta+1}^*$ accurately. Assumption \ref{assume: LGS assumption} formalizes this intuition and Theorem \ref{corollary:property of projection} further establishes the approximation quality of $\widehat{\alpha}_1-\widehat{\alpha}_2$. Recall the definition in Section \ref{sec:intro} that $\kappa=\min_{1\le k \le K}\left\|\beta^*_{\eta_{k} +1} - \beta_{\eta_k } ^*\right\|_2$ denotes the minimum change size and $\Delta=\min_{1\le k \le K+1} (\eta_{k}-\eta_{k-1})$ denotes the minimum spacing between change-points.

\begin{assumption}\label{assume: LGS assumption}
	\
	\\
	{\bf a.} There exists an absolute constant $C_\kappa$ such that $ \left\|\beta^*_{\eta_{k} +1} - \beta_{\eta_k } ^*\right\|_2 \le C_\kappa$ for all $k=1,\cdots, K.$
	\\
	{\bf b.} [SNR] We have $\Delta \kappa^2  \ge  C_{snr}\s \log(pn)$ where $C_{snr}=C_{snr}(n)$ is a diverging sequence as $n\to \infty.$
\end{assumption}
\noindent Assumption \ref{assume: LGS assumption}{\bf a} is a technical condition needed in the proof, which is also used in \cite{lee2016lasso} and \cite{kaul2018parameter}. Assumption \ref{assume: LGS assumption}{\bf b} implies that $\Delta \ge  C_{snr}\kappa^{-2}\s \log(pn) \ge C_{snr}C_\kappa^{-2} \s \log(pn)$, which is the standard SNR condition in the Lasso literature. Note that we require $C_{snr}\to\infty$ as $n\to \infty$, but the divergence rate can be arbitrarily slow.

\begin{theorem} \label{corollary:property of projection}
	Suppose Assumptions \ref{assume: model assumption}-\ref{assume: LGS assumption} hold  and  $\lambda=C_\lambda\sqrt {\log(pn)}$ for some sufficiently large constant $C_\lambda$. Let $\left(\widehat{\alpha}_1, \widehat{\alpha }_2, \widehat{ \nu } \right)$ be the output of LGS $(\{x_t, y_t\}_{t=1}^{n}, (s,e], \lambda)$. Suppose that $(s, e]$ satisfies $e-s\geq \Delta/2$ and contains exactly one change-point $\eta$ such that
	$$\min \{ \eta-s, e-\eta  \} \ge \frac{e-s}{10}.$$
	Then with probability at least $1-2(pn)^{-4} $, it holds that 
	$$\big  \|   ( \widehat \alpha _1 - \widehat \alpha _2)  - (\beta^*_\eta -\beta^*_{\eta+1 })  \big\|_2 \le \frac{c_x   }{32 C_x } \|  \beta^*_\eta -\beta^*_{\eta+1 }  \| _2. $$
\end{theorem}

Theorem \ref{corollary:property of projection} states that when the subsample $\{x_t,y_t\}_{t=s+1}^e$ contains only one change-point $\eta$ and has sufficient number of observations, the proposed LGS algorithm can accurately estimate the optimal projection direction $\beta_{\eta}^*-\beta_{\eta+1}^*$, which serves as the foundation for the later theoretical analysis of the projection based framework.  
\section{Variance-Projected Wild Binary Segmentation}\label{sec:vpwbs}
In this section, we formalize the discussion in Section \ref{sec:general_framework} and present the variance-projected wild binary segmentation~(VPWBS) algorithm for multiple change-point estimation in high-dimensional linear regression of Model \eqref{assume:change-point regression model}.

Note that the LGS algorithm and the projection framework in Section \ref{sec:general_framework} are discussed under the single change-point scenario. To further extend to multiple change-point estimation, VPWBS employs the mechanism of wild binary segmentation in \cite{fryzlewicz2014wild}, where the essential idea is to perform single change-point estimation on $M$ randomly generated intervals $\left\{(a_m,b_m]\right\}_{m=1}^M$ where $1\leq a_m+1<b_m\leq n$. For a sufficiently large $M$, with high probability, for every true change-point in $\{\eta_k\}_{k=1}^K$, there exists at least one random interval $(a_m,b_m]$ such that $\eta_k$ is the only change-point contained in $(a_m,b_m]$. More specifically, the good event
\begin{align*}
	\mathcal{M}=\bigcap_{k=1}^K \{a_m\in \mathcal{S}_k, b_m\in\mathcal{E}_k, \text{ for some } m \in \{1,2,\cdots,M\} \}
\end{align*}
will hold with high probability, where $\mathcal{S}_k=(\eta_k-3\Delta/4,\eta_k-\Delta/2]$ and $\mathcal{E}_k=(\eta_k+\Delta/2,\eta_k+3\Delta/4]$, for $k=1,2,\cdots, K.$ It is easy to see that if $a_m\in \mathcal{S}_k$ and $b_m\in \mathcal{E}_k$, we have $(a_m,b_m]$ only contains a single change-point $\eta_k$, as by definition the minimum spacing between two consecutive change-points is $\Delta.$ Theorem \ref{theorem:vpcusum} later provides a rigorous bound for the probability that event $\mathcal{M}$ holds.


Another issue that needs to be addressed is that for the projection idea in \eqref{eq:projection_mean} to be theoretically valid, the projection direction $u$ is required to be independent from the observations $\{x_t,y_t\}_{t=1}^n.$ To tackle this issue, we use sample splitting, a commonly used technique in high-dimensional statistics, see for example \cite{wang2018high}, \cite{wang2021optimal} and \cite{Zou2020}. Without loss of generality, we assume the original sample $\{x_t,y_t\}_{t=1}^{2n}$ is of length $2n$~(i.e.\ even) and we estimate the projection direction using LGS on the oddly-indexed observations $\{x_t^{(1)},y_t^{(1)}\}_{t=1}^n$ and perform change-point estimation on the projected univariate  series based on the evenly-indexed observations $\{x_t^{(2)},y_t^{(2)}\}_{t=1}^n$, where
$$(x_{t}^{(1)},y_t^{(1)})=(x_{2t-1},y_{2t-1}) \text{ and } (x_{t}^{(2)},y_t^{(2)})=(x_{2t},y_{2t}) \text{ for } t=1,\cdots,n.$$

To summarize, VPWBS implements the following two-stage procedure. In the first stage, given $M$ random intervals $\left\{(a_m,b_m]\right\}_{m=1}^M$, the LGS in Algorithm \ref{alg:LGS} is implemented on $\{x_t^{(1)},y_t^{(1)}\}_{t=1}^n$ for each of the $M$ subsamples indexed by $(a_m,b_m]$ and returns $M$ projection directions $\{{u}_m\}_{m=1}^M$. In the second stage, based on $\{{u}_m\}_{m=1}^M$ and $\{x_t^{(2)},y_t^{(2)}\}_{t=1}^n$, we conduct mean change-point detection on the projected univariate time series $\{z_t({u}_m)={u}_m^\top x_t^{(2)}y_t^{(2)}\}_{t=a_m+1}^{b_m}$ for $m=1,\cdots, M$ via the classical CUSUM statistics. For a univariate series $\{z_t(u_m)\}_{t=a_m+1}^{b_m}$ and $a_m\leq s_m<\nu <e_m\leq b_m$, the CUSUM statistics computed on $\{z_t(u_m)\}_{t=s_m+1}^{e_m}$ is defined as
\begin{align}\label{eq: 1D cusum}
	\widetilde Z^{s_m,e_m}_{\nu}(u_m) = \sqrt\frac{e_m-\nu}{(e_m-s_m)(\nu-s_m)} \sum_{t=s_m+1}^\nu z_t(u_m) - \sqrt\frac{\nu-s_m}{(e_m-s_m)(e_m-\nu)}\sum_{t=\nu+1}^{e_m} z_t(u_m).
\end{align}

We summarize the detailed description of VPWBS in Algorithm \ref{alg:VPWBS}. In total, there are four tuning parameters $(M, \lambda, \tau, \zeta)$ of the algorithm, where $M$ is the number of random intervals, $\lambda$ regulates the group Lasso penalty, $\tau$ is the threshold level of the maximum CUSUM statistics and $2\zeta$ is the minimum length required for a subsample $(s_m,e_m]$ to be considered for change-point detection. Theorem \ref{theorem:vpcusum} establishes the consistency and localization rate of VPWBS and gives the theoretical orders required for the tuning parameters $(M, \lambda, \tau, \zeta)$. We refer more details to the discussion after Theorem \ref{theorem:vpcusum}.

\begin{algorithm}[ht]
	\begin{algorithmic}
		\INPUT 1st sample $\{x_t^{(1)},y_t^{(1)}\}_{t=1}^n$, 2nd sample $\{x_t^{(2)},y_t^{(2)}\}_{t=1}^n$, random intervals $\{ (a_m,b _m]\}_{m=1}^M$, tuning parameters  $\lambda>0 $, $\tau>0$,  $ \zeta >0 $.
		
		\vspace{0.2cm}
		\noindent \hspace{-0.4cm} \textbf{Initialize} the set of estimated change-points as $\textbf{S}=\varnothing$ and set $(s,e]=(0,n]$.
		\vspace{0.1cm}
		
		\noindent \hspace{-0.4cm} \textbf{Stage 1}: LGS and projection
		\For{$m = 1, \ldots, M$}  
		\State compute 
		$ \{\widehat{\alpha}_1 ^m , \widehat{\alpha }_2^m \}  \leftarrow \text{ LGS } (\{x_t^{(1)},y_t^{(1)}\}_{t=1}^n, (a_m,b _m], \lambda) .$
		\State set the projection direction: $u_m  \leftarrow {(\widehat{\alpha }_2^m -\widehat{\alpha}_1^m)}/{\| \widehat{\alpha }_2^m -\widehat{\alpha}_1 ^m \|_2}.$
		\State set the projected univariate series: $z_{t} (u_m) \leftarrow u_m^\top x_t^{(2)} y_t^{(2)} \text{ for } t\in (a_m,b_m].$ 
		\EndFor
		
		\vspace{0.1cm}
		\noindent \hspace{-0.4cm} \textbf{Stage 2}: WBS($(s,e], \{ (a_m,b _m]\}_{m=1}^M, \tau ,\zeta$ )
		\For{$m = 1, \ldots, M$}  
		\State $(s_m , e_m] \leftarrow  ( s , e ]\cap  (a _m,  b _m]$
		\If{$e_m - s_m \geq 2\zeta$}
		\State $D_{m} \leftarrow \arg\max_{s_m +\zeta \leq t \leq e_m -\zeta}  | \widetilde Z^{s_m,e_m}_{t} ( u_m )|$  \Comment{Recall \Cref{eq: 1D cusum}}
		\State $A_m \leftarrow \max_{s_m +\zeta   \leq t \leq e_m -\zeta}  | \widetilde Z^{s_m,e_m}_{t} (u_m )| $ 
		\Else 
		\State $A_m \leftarrow-1$	
		\EndIf
		\EndFor	
		\State $m^* \leftarrow \arg\max_{m = 1, \ldots, M} A_{m}$
		\If{$A_{m^*} > \tau$}
		\State add $D_{m^*}$ to the set $\textbf{S}$
		\State  WBS $(   (s, D_{m^* }] ,  \{ (a_m,b _m]\}_{m=1}^M , \tau ,\zeta  )$
		\State  WBS $(  (D_{m^* } ,  e ] ,  \{ (a_m,b _m]\}_{m=1}^M , \tau, \zeta  )$
		\EndIf
		
		\vspace{0.2cm}
		\OUTPUT The set of estimated change-points $\textbf{S}$.
		\caption{Variance-Projected Wild Binary Segmentation.  
			VPWBS $(\{(a_m,b _m]\}_{m=1}^M , \lambda, \tau, \zeta)$} 
		\label{alg:VPWBS}
	\end{algorithmic}
\end{algorithm}

We remark that the sample splitting step of VPWBS in Algorithm \ref{alg:VPWBS} is mainly needed for establishing its theoretical validity in Theorem \ref{theorem:vpcusum}. In practice, we find that VPWBS is often more efficient without sample splitting. In other words, we can set both $\{x_t^{(1)},y_t^{(1)}\}_{t=1}^n$ and $\{x_t^{(2)},y_t^{(2)}\}_{t=1}^n$ as the original sample in Algorithm \ref{alg:VPWBS}. See \cite{wang2018high} for similar phenomenon in high-dimensional mean change-point estimation. Recall from Section \ref{sec:intro} that $\mathfrak{N}:=\max_{1\le t\le n }\|\beta_t^*\|_2^2$ and by assumption \ref{assume: model assumption}, we have $\mathfrak{N}\le C_\beta^2 \s$ and $\N < \max_{1\leq t\leq n} \text{Var}(y_t)/c_x.$

\begin{theorem} \label{theorem:vpcusum}
	Suppose Assumptions \ref{assume: model assumption}-\ref{assume: LGS assumption} hold. Let $\{(a_m, b_m] \}_{m=1}^M $ be a collection of intervals whose end points are drawn independently and uniformly from $\{1,\ldots, n\}$ and that $\max_{1\le m \le M} (b_m -a_m)\le C_R \Delta$ for some absolute constant $C_R>0$.  
	\\
	\indent Let $\{\widehat \eta_k\}_{k=1}^{\widehat K}$ be the estimated change-points by VPWBS with data $\{x_t^{(1)},y_t^{(1)}\}_{t=1}^n$, $\{x_t^{(2)},y_t^{(2)}\}_{t=1}^n$, random intervals $\{ (a_m,b _m]\}_{m=1}^M$, and tuning parameters $\lambda>0 $, $\tau>0$,  $ \zeta >0 $, where 
	$$\lambda =  C_\lambda \sqrt {\log(pn)}, \quad \tau = C_\tau  \sqrt { ( \N +1) \log(n)}, \quad \text{and} \quad \zeta = C_\zeta  (\N+1)  \log(n)$$
	for sufficiently large constants $ C_\lambda , C_\tau $ and $ C_\zeta$. Then there exists an absolute constant $C  $ such that 
	\begin{align}
		\mathbb{P}\Bigl\{  \widehat K =K ; \ |\eta_k-\widehat \eta_k| \le & 
		\frac{C (\N+1)  \log(n)}{\kappa^2_k }  \text{ for all  } 1\le k \le K    \Bigr\}  \nonumber\\
		\geq &   1-  n^{-2}  - \exp \bigg( \log\big( \frac{n}{\Delta} \big) - \frac{M\Delta^2}{16 n^2}\bigg) .  \label{eq:localization bound 2}
	\end{align}	
\end{theorem}
Theorem \ref{theorem:vpcusum} establishes the consistency of VPWBS and further provides the localization error rate. Note that since $\mathfrak{N}\le C_\beta^2 \s$ and $\N < \max_{1\leq t\leq n} \text{Var}(y_t)/c_x$, we have $\mathfrak{N}=O(1)$ if the sparsity level $\s$ is a constant or the maximum variance of the response $y_t$ is upper bounded, which is a rather mild condition. In such case, the localization error bound in Theorem \ref{theorem:vpcusum} further implies
$$  \epsilon  = \max_{1 \le k \le K }  \frac{\ |\eta_k-\widehat \eta_k| }{ n } \le  \max_{1 \le k \le K }  C\frac{\log(n)}{ n\kappa^2_k }, $$
where $\epsilon$ is the localization error defined in \eqref{eq:sup-norm} and  $\kappa_k=\|\beta_{\eta_k }-\beta_{\eta_k+1} \|_2$ is the change size at $\eta_k$. Up to a log factor, this matches the well-known minimax optimal rate for change-point estimation, see \cite{wang2018univariate} and references therein.

Theorem \ref{theorem:vpcusum} requires $\max_{1\le m \le M} (b_m -a_m)\le C_R \Delta$, which essentially implies that the random intervals cannot contain too many change-points. See similar assumptions in \cite{kaul2018parameter}. Note that if $\Delta \asymp n$, the assumption becomes minimal as we can simply set $C_R \Delta=n.$ We remark that Theorem \ref{theorem:vpcusum} still holds without the assumption $\max_{1\le m \le M} (b_m -a_m)\le C_R \Delta$, however, the localization error rate in \eqref{eq:localization bound 2} will be inflated to $({n}/{\Delta})^2 \cdot ({C (\N+1)  \log(n)}/{\kappa^2_k })$ by a factor $({n}/{\Delta})^2$. This is a phenomenon commonly seen in the high-dimensional change-point literature, see for example \cite{wang2018high}, \cite{wang2021optimal} and \cite{Li2021}.

\textbf{Discussion on tuning parameters $(M,\lambda, \tau, \zeta)$}: By the probability bound \eqref{eq:localization bound 2} in \Cref{theorem:vpcusum}, for the consistency of VPWBS, it is necessary to choose the number of random intervals $M \succeq  { n^2 {\log(n)}}/{\Delta^2}$. In particular, suppose that $\Delta \asymp n$~(i.e. there are finite number of change-points), it suffices to choose $M = (\log(n))^2 $. The tuning parameter $\lambda$ is needed in the LGS algorithm and assumes the standard order $C_\lambda \sqrt{\log(pn)}$ of the group Lasso penalty in the literature~(see also Theorem \ref{corollary:property of projection}). The parameter $\tau$ is commonly seen in the change-point literature, and is needed to threshold the maximum CUSUM statistics and controls false positive detection. To derive $\tau$, we need to study the order of the maximum CUSUM statistics under the no change-point scenario. As for the tuning parameter $\zeta$, intuitively, for small subsamples, the estimation error of LGS and the CUSUM statistics become difficult to control. The parameter $\zeta$ is designed to handle such scenario and regulates the minimum length required for a subsample $(s,e]$ to be considered for change-point detection. See similar tuning parameters in \cite{leonardi2016computationally} and \cite{kaul2018parameter}. Note that simple algebra gives that $\Delta \succeq \zeta.$ In practice, it suffices to set $\zeta=\log(n)$.

In general, VPWBS is highly robust to the choices of $M$ and $\zeta$, and the key tuning parameters affecting the performance of VPWBS are $\lambda$ and $\tau$. In Section \ref{sec:sim}, we propose a cross-validation procedure to select $\lambda$ and $\tau$ in a fully data-driven fashion.

\section{Simulations}\label{sec:sim}
In this section, we conduct extensive numerical experiments to examine the performance of VPWBS under various simulation settings and further compare it with  two other state-of-the-art methods in the literature, specifically, EBSA in \cite{leonardi2016computationally} and  SGL in \cite{zhang2015multiple}. Implementations of the numerical experiments can be found at the GitHub link \href{https://github.com/darenwang/VPBS}{here}. We discuss the implementation details such as settings for each algorithm and estimation accuracy metrics in Section \ref{subsec:alg_setting} and present the simulation results in Section \ref{subsec:simu_results}.

\subsection{Implementation details}\label{subsec:alg_setting}
Given estimated change-point estimators  $\{\widehat \eta_k\}_{k=1}^{\widehat K}$, we measure the estimation accuracy via the scaled Hausdorff distance, a popular metric used in the change-point literature. Specifically, denoting the true change-points as $\{\eta_k\}_{k=1}^K $, the scaled Hausdorff distance is defined as
$$\mathcal D(\{\widehat \eta_k\}_{k=1}^{\widehat K} , \{\eta_k\}_{k=1}^K) = {d(\{\widehat \eta_k\}_{k=1}^{\widehat K}, \{\eta_k\}_{k=1}^K)}/{n},$$
where $ d(\cdot, \cdot)$ denotes the Hausdorff distance between two compacts sets $A,B$ in $\mathbb{R}$, given by
$$d(A ,B) = \max\left\{   \max_{a  \in A } \min_{b\in B }   |a-b| , \, \max_{b\in B } \min _{a  \in A } |a-b|\right\}.$$
Note that  $\mathcal D(\{\widehat \eta_k\}_{k=1}^{\widehat K} , \{\eta_k\}_{k=1}^K) \le 1$ when both $K ,\widehat K \ge 1 $. Therefore, following the convention in the change-point literature, we set $\mathcal D (\varnothing, \{\eta_k\}_{k=1}^K) = 1$. 

\textbf{Implementation of VPWBS}: As discussed in Section \ref{sec:vpwbs}, there are four tuning parameters $(M,\lambda,\tau,\zeta)$ in VPWBS. Throughout the simulation section, we set $M=40$ and set $\zeta=5$, which roughly corresponds to $M=(\log(n))^2$ and $\zeta=\log(n)$ across all simulation settings in Section \ref{subsec:simu_results}. We remark that the performance of VPWBS is robust to the choices of $(M, \zeta)$, and the key tuning parameters are $(\lambda, \tau)$.

In the following, we provide a sample splitting based cross-validation procedure that selects $(\lambda, \tau)$ in a fully data-driven fashion. Specifically, given the original sample $\{x_t,y_t\}_{t=1}^n$, we set the training data to be the oddly-indexed observations $\{x_{2t-1}, y_{2t-1}\}_{t=1}^{n/2}$ and the test data to be the evenly-indexed observations $\{x_{2t}, y_{2t}\}_{t=1}^{n/2}$, where we assume, without loss of generality, $n$ is even. Note that the training data and test data share the same number and locations of change-points~(up to one time point difference).

Denote the candidate sets of $\lambda,\tau$ as $\Lambda, \mathcal T \subset \mathbb R^+$. For each pair of $(\lambda,\tau) \in \Lambda\times \mathcal T$, using the training data, we compute the estimated change-points via VPWBS and further estimate the (piecewise constant) regression coefficients $\{\widehat\beta_t\}_{t=1}^{n/2}$ \textit{conditional} on the estimated change-points as in Model \eqref{assume: model assumption}. We then compute the prediction error of $\{\widehat \beta_t\}_{t=1}^{n/2}$ using the test data via
$$e_t=y_{2t}-x_{2t}^\top \widehat{\beta}_t, \quad t=1,2,\cdots,n/2.$$
The tuning parameters $(\lambda, \tau)$ are then selected as the pair of $(\lambda,\tau) \in \Lambda\times \mathcal T $ that achieves the minimum squared prediction error $\sum_{t=1}^{n/2}e_t^2$ on the test data. Note that in the cross-validation procedure, the random intervals $\{(a_m,b_m]\}_{m=1}^M$ and the minimum length $\zeta$ are kept the same across all pairs of $(\lambda,\tau)$. For all simulation experiments in Section \ref{subsec:simu_results}, we set $\Lambda=\{0.5,1,1.5,2\}$ and $\mathcal{T}=\{1,4,7,10,\cdots,49\}$.

\textbf{Implementation of competing methods}: The EBSA algorithm is proposed in \cite{leonardi2016computationally}, which performs change-point detection for high-dimensional regression via a model selection point of view. Specifically, a dynamic programming algorithm is proposed to directly estimate the unknown change-points by minimizing an $l_0$-penalized goodness of fit function. In contrast, VPWBS utilizes a group Lasso based local screening~(LGS) algorithm for estimating an optimal projection direction and uses CUSUM statistics for change-point estimation, where the LGS is an $l_1$-penalized M-estimator. For computational efficiency, a binary segmentation based algorithm is further proposed in \cite{leonardi2016computationally} to find an approximate minimizer of the penalized function with strong theoretical guarantees. We choose the tuning parameters of EBSA using its default settings as specified in \cite{leonardi2016computationally}. We note that EBSA gives slightly worse performance when its tuning parameters are selected via the sample splitting based cross-validation.

The sparse group Lasso~(SGL) is first introduced by \cite{simon2013sparse} and is later used by \cite{zhang2015multiple} for change-point detection in the high-dimensional regression setting. See also \cite{Harchaoui2007,Harchaoui2010}, \cite{Bleakley2011} and references therein for earlier work along this line of research, where the classical fused Lasso is used for change-point detection in mean for low-dimensional time series.

Given $\{x_t, y_t\}_{t=1}^n$, SGL computes
\begin{align}\label{eq:SGL full}
	\{\widehat \beta_t\}_{t=1}^n =&\arg\min_{ (\beta_1,\ldots,\beta_n)}\sum_{t=1}^n (y_t -X_t\beta_t )_2^2+ \lambda \sum_{t=1}^{n-1} \| \beta_{t+1} -\beta_{t}\|_2 + \gamma\sum_{t=1}^{n-1} \| \beta_{t+1} -\beta_{t}\|_1,
\end{align}
which can be seen as a variant of the classical fused Lasso with an extra group sparsity penalty.   Note that SGL is a global method as it estimates $\{\widehat \beta_t\}_{t=1}^n$~(and thus multiple change-points) based on the entire sample. In comparison, the local group Lasso based screening~(LGS) algorithm in Stage 1 of the proposed VPWBS is a local method and is designed to directly target single change-points. 

Define the function $f: \{1,2,\cdots,n-1\} \to \mathbb R$ where $f(t): = \| \widehat \beta_t -\widehat \beta_{t-1}\|_2.$ Note that $\{\widehat \beta_t\}_{t=1}^n$ estimated by SGL in \eqref{eq:SGL full} may not directly lead to accurate change-point estimation as $\sum_{t=1}^{n-1}\mathbb{I}(f(t)>0)$ is generally a large number and leads to uncontrollable false positives, where $\mathbb{I}$ denotes the indicator function.   In practice, the SGL estimator $\{\widehat \beta_t\}_{t=1}^n$ typically exhibits the so-called staircase pattern, a pattern commonly seen in fused Lasso based estimation~\citep[e.g.][]{Rojas2014,Owrang2017}, where $\{\widehat \beta_t\}_{t=1}^n$ contains large-scale changes accompanied by many small-scale jumps. See \Cref{fig:sgl} of the supplementary material for an illustration of such phenomenon.  To avoid false positive estimation, given knowledge of the true number of change-points $K$, a common practice in the literature, see e.g.\ \cite{Harchaoui2010}, is to estimate the change-points as the locations where the function $f$ achieves its $K$ largest values.

However, in practice, $K$ is typically unknown. Thus, to further improve the applicability of SGL, in our experiments we consider a variant of the SGL algorithm combined with wild binary segmentation in \cite{fryzlewicz2014wild}, which we refer to as Wild Binary Segmentation via SGL (WBSSGL). Specifically, WBSSGL further post-processes the estimated $\{\widehat{\beta}_t\}_{t=1}^n$ by SGL on $M$ random intervals $\{(a_m,b_m]\}_{m=1}^M$. For a subsample $\{\widehat{\beta}_t\}_{t=a_m+1}^{b_m}$ and $a_m\leq s_m<\nu<e_m\leq b_m$, WBSSGL computes the ($p$-dimensional) subsample CUSUM statistics for $\{\widehat{\beta}_t\}_{t=s_m+1}^{e_m}$ defined as
\begin{align}\label{eq:sgl_bs}
\widehat {\mathcal B} ^{s_m,e_m}_\nu =   \sqrt {\frac{ (e_m-\nu )}{ ( e_m-s_m) (\nu-s_m )} } \sum_{t=s_m+1}^{\nu} \widehat \beta _{t}  -\sqrt {\frac{(\nu-s_m ) }{ ( e_m-s_m)  (e_m-\nu ) } } \sum_{t=\nu+1}^{e_m  } \widehat \beta_{t},
\end{align}
and further compares it with a suitable threshold. The detailed implementation of WBSSGL is given in \Cref{algorithm:WBSSGL}. For all simulation experiments in Section \ref{subsec:simu_results}, numerical results indicate that WBSSGL outperforms the original SGL algorithm by a wide margin. Thus, in the following we only present the results for WBSSGL.

There are five tuning parameters $(M, \lambda, \gamma, \tau, \zeta)$ of WBSSGL, which are selected in the same way as VPWBS. Specifically, we set the random intervals $\{(a_m,b_m]\}_{m=1}^M$ and the minimum length $\zeta$ of WBSSGL to be the same as VPWBS. The key tuning parameters $(\lambda,\gamma,\tau)$ of WBSSGL are selected using the same cross-validation procedure as the one implemented for VPWBS. For each combination of $(\lambda,\gamma)$, we solve the original SGL in \eqref{eq:SGL full} via the R package \texttt{SGL}.

\begin{algorithm}[ht]
	\begin{algorithmic}
		\INPUT  data $ \{x_t, y_t\}_{t=1}^{n}  $, random intervals $\{(a_m,b_m]\}_{m=1}^M$, \\ tuning parameters $\lambda>0 , \gamma>0, \tau>0, \zeta>0$.
		
		\vspace{0.2cm}
		\noindent \hspace{-0.4cm} \textbf{Initialize} the set of estimated change-points as $\textbf{S}=\varnothing$ and set $(s,e]=(0,n]$.
		\vspace{0.1cm}
		
		\noindent \hspace{-0.4cm} \textbf{Stage 1}: SGL
		\State Compute $\{\widehat \beta _t \}_{t=1}^n $ via \eqref{eq:SGL full} with tuning parameters $\lambda, \gamma$.		
		
		\vspace{0.15cm}
		\noindent \hspace{-0.4cm} \textbf{Stage 2}: WBS($(s,e], \{ (a_m,b _m]\}_{m=1}^M, \tau ,\zeta$ )
		\For{$m = 1, \ldots, M$}  
		\State $ ( s_m, e_m] \leftarrow  ( s,e]\cap (a_m,b_m]$	
		\If {$e_m- s_m  \geq  2\zeta $}
		\State $D_{m} \leftarrow \arg\max_{s_m +\zeta \leq t \leq e_m -\zeta}  \| \widehat {\mathcal B} ^{s_m,e_m}_t  \|_2$  \Comment{Recall \Cref{eq:sgl_bs}}
		\State 	$A_m \leftarrow \max_{s_m +\zeta \leq t \leq e_m -\zeta}\| \widehat {\mathcal B} ^{s_m,e_m}_{t} \|_2$
		\Else
		\State
		$A_m \leftarrow   -1$
		\EndIf 
		\EndFor 
		\State $m^* \leftarrow \arg\max_{m = 1, \ldots, M} A_{m}$
		\If{$A_{m^*} > \tau$}
		\State add $D_{m^*}$ to the set $\textbf{S}$
		\State  WBS $(   (s, D_{m^* }] ,  \{ (a_m,b _m]\}_{m=1}^M , \tau ,\zeta  )$
		\State  WBS $(  (D_{m^* } ,  e ] ,  \{ (a_m,b _m]\}_{m=1}^M , \tau, \zeta  )$
		\EndIf
		
		\vspace{0.2cm}
		\OUTPUT The set of estimated change-points $\textbf{S}$.
		\caption{Wild Binary Segmentation via SGL. WBSSGL$(\{(a_m,b _m]\}_{m=1}^M , \lambda, \gamma, \tau, \zeta)$} \label{algorithm:WBSSGL}
	\end{algorithmic}
\end{algorithm}

\subsection{Simulation results}\label{subsec:simu_results}
In this section, we conduct extensive numerical experiments to examine the performance of VPWBS, EBSA and WBSSGL in terms of estimation accuracy and computational cost. We design a wide range of simulation settings by varying change size $\kappa$, spacing between change-points $\Delta$, number of change-points $K$, sparsity level $\s$, sample size $n$ and dimension $p$. The variance of noise $\sigma_\varepsilon^2$ is set at 1 for all settings. For each simulation setting, we repeat the experiments 100 times. The detailed simulation setting is as follows. We further plot typical realizations of $\{y_t\}_{t=1}^n$ for each setting in Figure \ref{figure:y}, where it can be seen clearly that information contained in $\{y_t\}_{t=1}^n$ is not sufficient for change-point estimation.

\textbf{Setting  (i): two change-points with varying change size $\kappa$}. In this setting, we fix $n=300$, $p=100$, $K=2$ and set the covariance matrix $\Sigma$ of $x_t$ to be the Toeplitz matrix with $\Sigma_{i,j} = 0.6^{|i-j|}$ for $i,j=1,\cdots,p$. The two change-points occur at $\eta_1=n/3=100$ and $\eta_2=2n/3=200$. The regression coefficients $\{\beta_t^*\}_{t=1}^n$ take the form
$$\beta_t^*  = \begin{cases}
	{\kappa} \cdot \alpha/{\sqrt{40} } , & \text{ for }   1 \le t  \le  {n}/{3},\\
	-{\kappa} \cdot  \alpha/{\sqrt{40} } , &\text{ for }  {n}/{3} +1   \le  t  \le  {2n}/{3},\\
	{\kappa}  \cdot \alpha /{\sqrt{40} } , & \text{ for }  {2n}/{3} + 1 \le t  \le n,
\end{cases}
$$
where $\alpha  = (\underbrace{1, -1,1,-1\ldots, -1}_{10}, \underbrace{0,\ldots,0}_{p-10})$ and we vary $\kappa \in \sqrt {40}\cdot \{ 1, 1.2, 1.4, 1.6\}$. Simple calculation shows that the change size of $\beta_t^*$ at both change-points equals $ \kappa$.

\textbf{Setting (ii): three change-points with varying sample size $n$}. In this setting, we vary $ n\in \{480, 560, 640, 720, 800 \}$, fix $p=100$, $K=3$ and set the covariance matrix $\Sigma$ of $x_t$ to be the identity matrix $I_p$. The three change-points occur evenly at $\eta_i=in/4$ for $i=1,2,3$. The regression coefficients $\{\beta_t^*\}_{t=1}^n$ take the form
$$\beta_t^* = \begin{cases}
	{2}/{5} \alpha,  &   \text{ for }    1 \le  t  \le  {n}/{4},	\\
	-{2}/{5} \alpha,  &    \text{ for }   {n}/{4} + 1 \le  t  \le  {n}/{2},	\\
	{2}/{5} \alpha,  &    \text{ for }   {n}/{2} + 1 \le  t  \le  {3n}/{4},	\\
	-{2}/{5} \alpha, &      \text{ for }  {3n}/{4} + 1 \le  t  \le  n,
\end{cases} 
$$
where $\alpha=( 1, -1 ,  1 ,-1   , 0,\cdots, 0)$. The change size of $\beta_t^*$ at each change-point equals $8/5.$

\textbf{Setting (iii): two change-points with varying $p$ and varying support of $\beta_t$}. In this setting, we fix $n=320$, $K=2$, vary $p\in \{  90, 100, 110, 120\}$ and set the covariance matrix $\Sigma$ of $x_t$ to be $I_p$. The two change-points occur unevenly at $\eta_1=120$ and $\eta_2=220$. The regression coefficients $\{\beta_t^*\}_{t=1}^n$ take the form 
$$\beta_t ^*  = \begin{cases}
	{2}/{3} \cdot (   \underbrace{1,\ldots,1}_{8}  , 0, \ldots, 0), &\text{ for } 1 \le  t\le 120,
	\\
	{2}/{3} \cdot (   \underbrace{0,\ldots,0}_{8}, \underbrace{1,\ldots,1}_{8}  , 0, \ldots, 0), &\text{ for  } 121 \le  t \le 220,
	\\
	{2}/{3} \cdot (   \underbrace{0,\ldots,0}_{16}, \underbrace{1,\ldots,1}_{8}  , 0, \ldots, 0), &\text{ for } 221 \le t \le 320.
\end{cases} 
$$
Simple calculation shows the change size of $\beta_t^*$ at each change-point equals $8/3.$

\textbf{Setting (iv): two change-points with uneven spacing and varying support  size $s$}. In this setting, we fix $n=520$, $K=2$, $p=100$ and set the covariance matrix of $x_t$ to be $\Sigma= I_p $. The two change-points occur unevenly at $\eta_1=160$ and $\eta_2=360$. The regression coefficients $\{\beta_t^*\}_{t=1}^n$ take the form 
$$\beta_ t^*  = \begin{cases}
	 \sqrt { {2}/{s}} \cdot  (   \underbrace{1,\ldots,1}_{s/2}  , \underbrace{3,\ldots,3}_{s /2 }  ,  0, \ldots, 0)  ,   &\text{ for } 1 \le  t  \le 160,
	\\
\sqrt { {2}/{s}} \cdot (   \underbrace{2,\ldots,2}_{s /2}  , \underbrace{1,\ldots,1}_{s /2}  ,  0, \ldots, 0)  ,  &\text{ for } 161 \le t \le 360,
	\\
 \sqrt { {2}/{s}}\cdot  (   \underbrace{1,\ldots,1}_{s /2}  , \underbrace{3,\ldots,3}_{ s /2}  ,  0, \ldots, 0) ,  &\text{ for } 361 \le t \le 520,
\end{cases}
$$
and we vary $ {s} \in \{16,20, 24, 28 \}$. Simple calculation shows that the change sizes of $\beta_t^*$ at both change-points equal $ \kappa=\sqrt {5}$.

\begin{center}
	\begin{figure}[h]
		\includegraphics[scale=0.45]{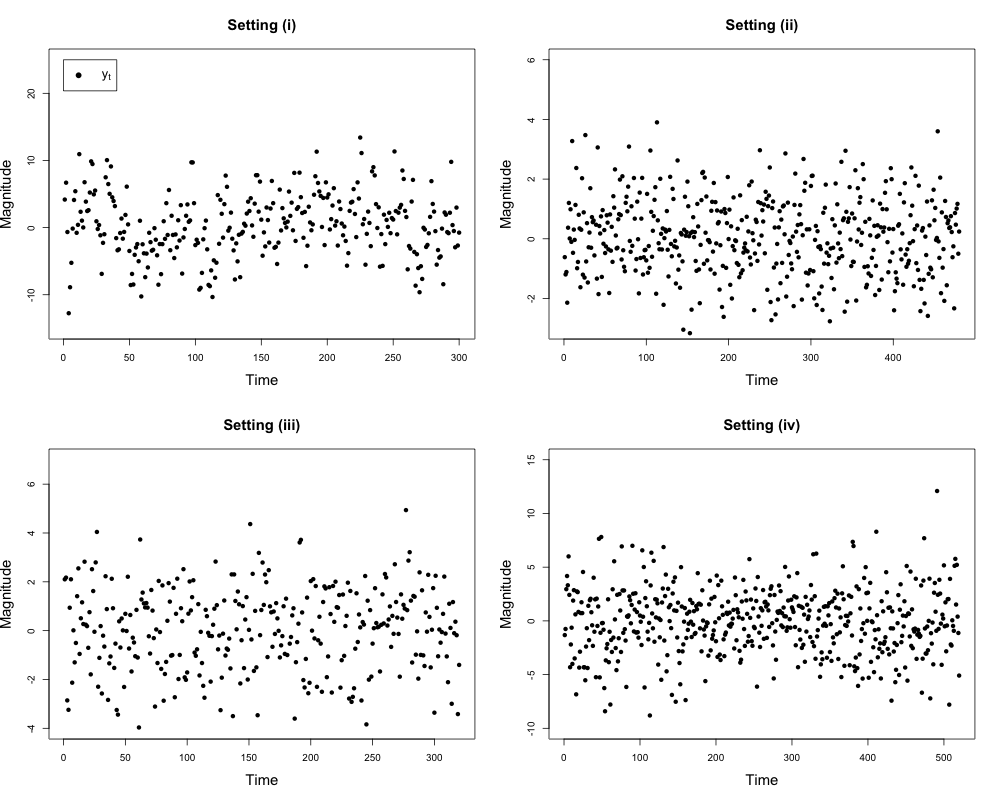} 
		\caption{Typical realizations of $\{y_t\}_{t=1}^n$ for Setting (i) with $\kappa=\sqrt{40}$, Setting (ii) with $n=480$, Setting (iii) with $p=90$ and Setting (iv) with $\kappa=0.35\sqrt{40}$.} \label{figure:y}
	\end{figure} 
\end{center}

\textbf{Estimation accuracy}: \Cref{table:numerical} reports the scaled Hausdorff distance (averaged over 100 repetitions) achieved by VPWBS, EBSA and WBSSGL across all simulation settings. For better visualization, Figure \ref{figure:bars} further provides the bar plots based on the results reported in Table \ref{table:numerical}. First, as expected, the performance of all three algorithms improve with larger sample size $n$~(setting (ii)) and with larger change size $\kappa$~(setting (i)), and worsen with higher dimension $p$~(setting (iii)) and with higher sparsity~(setting (iv)).

Overall, VPWBS offers robust and competitive performance for change-point estimation across all simulation settings, and consistently outperforms its competitors under the low SNR scenario where the sample size $n$ or the change size $\kappa$ is small. Compared to WBSSGL, which conducts change-point estimation directly on the estimated $\{\widehat{\beta}_t\}_{t=1}^n$ by a penalized M-estimator~(i.e.\ SGL), VPWBS in general gives more favorable performance, which could be seen as numerical evidence confirming the importance of the projection step in VPWBS. It is worth noting that when the SNR is large, EBSA becomes highly competitive. We conjecture that EBSA is also minimax optimal when the signal strength $\Delta \kappa^2$ is sufficiently large, though rigorous proof of such result seems challenging.

\begin{table}[ht]
	\begin{center}
		\renewcommand{\arraystretch}{1.5}
		\begin{tabular}{ccccccccc }   
			
			&   &  VPWBS        &   EBSA   & WBSSGL    
			\\
			\hline 
			\hline
			\multirow{4}{*}{Setting  (i)  } 
			& $\kappa=  \sqrt {40 } $ &   {\bf  0.011}    (0.015) & 0.090 (0.055) & 0.151 (0.094)
			\\  &$\kappa= 1.2\sqrt {40 } $ & {\bf 0.009}  (0.017) & 0.060 (0.052) &0.118 (0.069)
			\\ &$\kappa= 1.4\sqrt {40 } $& {\bf 0.010} (0.024)& 0.033 (0.044) &0.101  (0.058)
			\\ & $\kappa=1.6\sqrt {40 } $  & {\bf 0.009}  (0.016)& 0.025 (0.039) &0.098 (0.061)
			\\ 
			\hline
			\hline
			\multirow{5}{*}{Setting  (ii)  } 
			& $ n=  480 $ &   {\bf  0.062}    (0.080) & 0.128 (0.177) & 0.101 (0.052)
			\\  &$ n=   560  $ & {\bf 0.044}  (0.042) & 0.064 (0.112)  &0.098 (0.053)
			\\ &$n =  640 $& {\bf 0.034} (0.060)&  {\bf 0.034} (0.084) &0.094 (0.055)
			\\ & $n = 720  $  &0.026  (0.054)&  {\bf  0.015} (0.023)&0.093 (0.075)
			\\ & $ n = 800 $  &  0.022   (0.046)& {\bf 0.009} (0.010) &0.091 (0.072)
			\\
			\hline 
			\hline
			\multirow{5}{*}{Setting  (iii)  } 
			& $ p=  80 $ &   {\bf  0.025}    (0.047) & 0.052 (0.047) & 0.103 (0.065)
			\\
			&  
			$ p=  90 $ &   {\bf  0.039}    (0.068) & 0.068 (0.051) & 0.109 (0.068)
			\\  &$ p=   100  $ & {\bf 0.033}  (0.056) & 0.056 (0.050)  &0.120 (0.072)
			\\ &$p =  110 $& {\bf 0.041} (0.063)& 0.055 (0.046) &0.125  (0.072)
			\\ & $p = 120  $  & {\bf 0.049}  (0.078)& 0.061 (0.048)&0.140 (0.079)
			\\ 
			\hline 
			\hline
			\multirow{4}{*}{Setting  (iv)  } 
			& 	$s =  16$ &   {\bf  0.027}    (0.072) & 0.055 (0.023) & 0.116 (0.059)
			\\  &$s = 20 $ & {\bf 0.024}  (0.060) & 0.059 (0.024) &0.141 (0.033)
			\\ &$ s= 24 $& {\bf 0.043} (0.089)& 0.057 (0.020) &0.141  (0.032)
			\\ & $s =28 $  & {\bf 0.057}   (0.106)&    0.063  (0.022) &0.144 (0.023)
			\\
			\hline
			\hline
		\end{tabular}  
		\caption{Scaled Hausdorff distance for VPWBS,  EBSA (\cite{leonardi2016computationally}) and  WBSSGL (\cite{zhang2015multiple}). For each cell, the experiment is repeated 100 times. The numbers in the brackets indicate the sample standard errors of the scaled Hausdorff distance. Each highlighted number indicates the best performance in the corresponding setting. 
		} \label{table:numerical}
	\end{center} 
\end{table} 

	\begin{figure}[h]\begin{center}
		\includegraphics[scale=0.65]{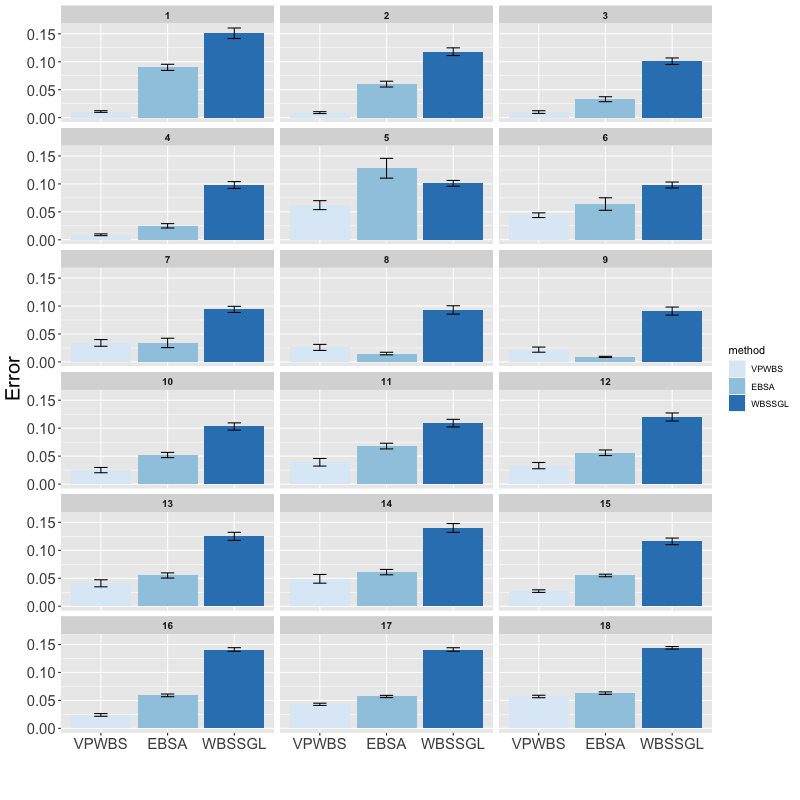} 
		\end{center}
		\caption{Bar plots for estimation results reported in \Cref{table:numerical}. Plots 1-4 correspond to Setting (i) with $\kappa\in \sqrt {40} \cdot\{1,1.2,1.4, 1.6\}$. Plots 5-9  correspond to Setting (ii) with $n\in  \{480, 560, 640, 720, 800 \}$. Plots 10-14  correspond to Setting (iii) with $p\in  \{80, 90, 100, 110, 120\}$. Plots 15-18  correspond to Setting (iv) with $s\in  \{   16,20, 24,28 \}$. } \label{figure:bars}
	\end{figure}

\textbf{Computational cost}: It is straightforward to derive that the computational cost of VPWBS is $O(M n\cdot \text{GroupLasso}(n,p))$, where $M$ is the number of random intervals used and recall that $\text{GroupLasso}(n,p)$ denotes the computational cost of the group Lasso for a $p$-dimensional regression with $n$ samples. Since we set $M=(\log (n))^2$, the computational complexity of VPWBS equals $O(n(\log(n))^2\cdot \text{GroupLasso}(n,p))$. On the other hand, referring to Table \ref{table for comparison}, the complexity of EBSA and WBSSGL are $O(n\log(n)\cdot \text{Lasso}(n,p))$ and $O(\text{Lasso}(n,np))$ respectively.

Note that the computational cost of solving Lasso and group Lasso for a $p$-dimensional linear regression with $n$ observations is both $O(np^2)$, see for example \cite{Efron2004} and \cite{Wright2009}. Thus, it is easy to see that in terms of computational efficiency, ESBA is the best, VPWBS comes second, and WBSSGL comes last. In practice, popular R packages typically implement gradient or coordinate descent to obtain an approximate solution of Lasso and group Lasso and the computation can be much faster than $O(np^2)$.

We conduct further numerical experiments to exam the computational performance of each algorithm in practice. Specifically, given $(n,p)$, we generate the regression coefficients $\{\beta_t^*\}_{t=1}^n$ and observations $\{x_t,y_t\}_{t=1}^n$ using Setting (i) with $\kappa= 1.6\sqrt {40}$. In the first set of experiments, we fix $n=450$ and vary $p\in \{80,100,120,140, 160, 180,200, 220 \}$; in the second set of experiments, we fix $p=100$ and vary $n\in \{240, 300, 360, 420, 480, 540,600, 660\} $. For each simulation setting, we repeat the experiments 100 times and report the average execution time of VPWBS, EBSA and WBSSGL in \Cref{figure:pop-projected}.

As can be seen in  \Cref{figure:pop-projected}, the computational costs of VPWBS and EBSA increase linearly with both the dimension $p$ and the sample size $n$. On the other hand, while the computational cost of WBSSGL grows linearly with $p$, it does not scale well with $n$. This is not surprising, as the SGL approach~\eqref{eq:SGL full} is essentially solving a Lasso with $n$ samples and $n p$ covariates.

\begin{center}
	\begin{figure}[H]
		\includegraphics[scale=0.57]{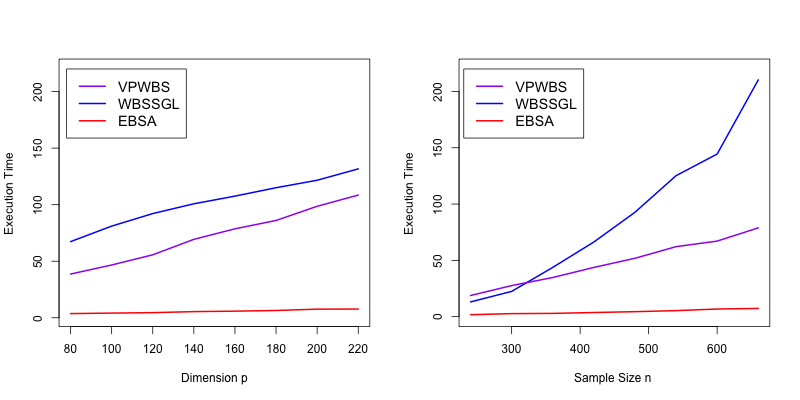} 
		\caption{Average execution time of VPWBS, EBSA and WBSSGL across different simple sizes $n$ and dimensions $p$.} \label{figure:pop-projected}
	\end{figure} 
\end{center}

\section{Discussion}\label{sec:discussion}
In this paper, we study the problem of multiple change-point estimation in high-dimensional linear regression model. We propose a novel projection-based algorithm, VPWBS, which performs change-point detection from a dimension reduction angle. Based on an estimated (optimal) projection direction, VPWBS transforms the original (difficult) problem of change-point detection in $p$-dimensional regression to a simpler problem of change-point detection in mean of a one-dimensional time series. VPWBS is shown to achieve the minimax optimal localization rate $O_p(1/n)$ up to a log factor, a significant improvement from the best rate $O_p(1/\sqrt{n})$ known in the existing literature. In addition, VPWBS is computationally efficient with a complexity of $O(n(\log(n))^2\cdot \text{GroupLasso}(n,p))$. Extensive numerical experiments are conducted to demonstrate the robust and favorable performance of VPWBS over two state-of-the-art algorithms in a wide range of simulation settings.

Besides the high-dimensional regression problem, we believe the projection based change-point estimation framework can be useful under other important contexts as well, such as change in covariance matrices and tensors. The key is to design an algorithm that utilizes the structure of the specific problem~(such as sparsity or low rank) and provides a provably accurate estimation of the (optimal) projection direction.

\subsection*{Acknowledgments}
We would like to thank the editor, Dr.\ Zaid Harchaoui, as well as the  three anonymous reviewers for their thoughtful assessment and constructive comments which helped us to improve the quality and the presentation of our paper.
  The work of RW was supported in part by AFOSR FA9550-18-1-0166, NSF DMS-1925101, NSF OAC-1934637, and DOE DE-AC02-06CH11357.
\bibliography{citations}

\appendix

\section{Proofs Related to \Cref{corollary:property of projection}}
In this section, we provide all the technical details for the proof of \Cref{corollary:property of projection}

 \begin{proof}[Proof  of \Cref{corollary:property of projection}  ] Denote 
 $$\kappa'=\|  \beta^*_\eta -\beta^*_{\eta+1 }  \| _2.$$ 
 Since $\left(\widehat{\alpha}_1, \widehat{\alpha }_2, \widehat{ \nu } \right)$ be the output of  
	     LGS $( \{  y_ i , x_i  \}_{ i = 1}^{ n } ,  (s,e] ,\lambda   ) $, it holds that 
	\begin{align*} (  \widehat \alpha_1,  \widehat \alpha_2) =  \argmin_{  \alpha _1, \alpha _2 \in \mathbb{R}^{p}   }   \Bigg\{&\sum_{ i = s  + 1}^{ \widehat  \nu } ( y_{ i } - x_i ^{\top} \alpha_1 \bigr )^2  
			  +  \sum_{ i  =   \widehat \nu  + 1}^{e  }  ( y_ i  - x^\top_i  \alpha _2  ) ^2  \\
		   +&   \lambda \sum_{ j  = 1}^p \sqrt{( \widehat  \nu  - s )(\alpha _{1,j}) ^2 + (e -  \widehat  \nu )(\alpha _{2,j})^2}\Bigg\} \ , 
		   \end{align*} 
 and that  by \Cref{lemma:group lasso localization},
$$ |\eta -\widehat \nu  |\le C\frac{ \s\log(pn)  }{(\kappa^{' })^2 } . $$ 
 Without loss of generality, assume that $s<\eta < \widehat \nu \le e. $ 
 Then by \Cref{lemma:group lasso estimation},
 $$   \big  \|  \widehat \alpha _1 - \beta^*_{ (s, \widehat \nu  ]} \big\|_2 ^2 \le C\frac{\s \log(pn) }{\Delta}  \le  
 \frac{C\s\log(pn) }{ C_{snr}  \s \log(pn)  \kappa^{-2}}  \le \frac{C \kappa^2 }{C_{snr}} \le \frac{C  (\kappa') ^2 }{C_{snr}}   , $$
For sufficient large constant $C_{snr} $, it holds that
 $$  \big  \|  \widehat \alpha _1 - \beta^*_{ (s, \widehat \nu  ]} \big\|_2 \le \frac{c_x  }{128C_x  }  \kappa' \quad \text{and} \quad  \big  \|  \widehat \alpha _2 - \beta^*_{ (  \widehat \nu, e   ]}  \big\|_2 \le \frac{c_x  }{128C_x  } \kappa' .$$
Since 
$ \beta^*_{ (  \widehat \nu, e   ]} =\beta_{\eta+1}^*, $
it follows  that 
\begin{align} \label{eq:corollary projection deviation 1} \big  \|  \widehat \alpha _2 - \beta^*_{ \eta+1 }  \big\|_2 \le 
\frac{c_x  }{128C_x  }   \kappa' .
\end{align} 
Note that
\begin{align*}
 \big  \|  \widehat \alpha _1 - \beta^*_{ (s, \widehat \nu  ]} \big\|_2 = & \bigg  \|  \widehat \alpha _1 -  \frac{   (s-\eta) \beta^*_{ \eta}  +( \widehat \nu -\eta )\beta^*_{\eta+1 }  }{\widehat \nu -s  }\bigg\|_2
 \\
 \ge&  \|  \widehat \alpha _1 - \beta_\eta^*\|_2   - \bigg  \| \beta_\eta^* -  \frac{   (s-\eta) \beta^*_{ \eta}  +( \widehat \nu -\eta )\beta^*_{\eta+1 }  }{\widehat \nu -s }\bigg\|_2 
 \\
 \ge &   \|  \widehat \alpha _1 - \beta_\eta^*\|_2  - \frac{ \widehat \nu -\eta }{\widehat \nu -s }  \|\beta_{\eta}^* -\beta^*_{\eta+1} \|_2 
 \\
 \ge &  \|  \widehat \alpha _1 - \beta_\eta^*\|_2  - \frac{ C \s\log(pn)   / (\kappa^{' })^2   }{   { \frac{1}{20} }   C_{snr}   \s \log(pn)  /\kappa^2   }  (\kappa') 
 \\
 \ge &  \|  \widehat \alpha _1 - \beta_\eta^*\|_2   -\frac{c_x  }{128C_x  } \frac{ \kappa^2}{\kappa' }
 \\ 
 \ge &    \|  \widehat \alpha _1 - \beta_\eta^*\|_2   -\frac{c_x  }{128C_x  }  \kappa' ,
\end{align*}
where the third inequality follows from the fact that 
$\widehat \nu -s \ge   \frac{e-s  }{10}  \ge \frac{\Delta }{20 }  \ge  \frac{1}{20 }     C_{snr}  \kappa^{-2} \s \log(pn)    $ and  the fourth inequality holds if $C_{snr}$ is sufficiently large.
As a result 
\begin{align} \label{eq:corollary projection deviation 2}\|  \widehat \alpha _1 - \beta_\eta^*\|_2 \le \frac{c_x  }{64C_x  } \kappa' .
\end{align} 
 The desired result is an immediate consequence of \Cref{eq:corollary projection deviation 1} and \Cref{eq:corollary projection deviation 2}.
 \end{proof}

	     \begin{lemma} \label{lemma:group lasso localization}
	    Suppose that $[s+1, e] \subset [1,n] $  is any interval  such that $ e-s\ge \frac{\Delta}{2}$ and  that $[s+1, e]$  contains exactly one change point $\eta$ which satisfies 
	$$\min \{ \eta-s, e-\eta  \} \ge \frac{e-s }{10}.$$ 
	   	Suppose Assumptions \ref{assume: model assumption}-\ref{assume: LGS assumption} hold  and  $\lambda = C_\lambda \sqrt {\log(pn) } $ for a  sufficiently large constant $C_\lambda$.   Let   $\left(\widehat{\alpha}_1, \widehat{\alpha }_2, \widehat{ \nu } \right)$ be the output of  
	     LGS $( \{  y_ i , x_i  \}_{ i = 1}^{ n } ,  (s,e] ,\lambda  ) $.  Denote 
$$ \kappa ' = \|\beta^*_{\eta }   -\beta^*_{\eta  + 1} \| _2 .$$	      
	     Then with probability at least $1-(pn)^{-4}$, it holds that 
	     $$ |\eta -\widehat \nu  |\le C\frac{ \s\log(pn)  }{(\kappa^{' })^2 } . $$ 
	     \end{lemma}

\begin{proof}[Proof of \Cref{lemma:group lasso localization}]
Let $S_1 $ be the support of $\beta_i^*$ when $ i\in(s,\eta]$ and $S_2 $ be the support of $\beta_i^*$ when $ i\in (\eta,e]$. 
Denote 
$$ S=S_1\cup S_2 .$$
Note that $|S|\le 2\s$ and that $S$ is the common support for $\beta_i^* $ for $i\in(s,e]$. 
Without loss of generality, we assume that $s  < \eta < \widehat{\nu }  < e $.  
Denote 
$$
		\widehat{\beta}_i  = \begin{cases}
 				\widehat{\alpha}_1, &  i \in  (s   , \widehat{ \nu }  ] , \\
 				\widehat{\alpha }_2, & i  \in  (\widehat{\nu}    , e  ]. 
 			\end{cases}		
$$ 
If 
	$
		\widehat \nu- \eta  <C_1\frac{ \s\log(pn)  }{\kappa^2 } 
	$
	for some sufficiently large constant $C_1$, then the desired result holds. 
	\\
	\\
 So suppose that 
	\begin{align}\label{eq:group lasso spacing 1}
	\widehat \nu- \eta  \ge C_1\frac{ \s\log(pn)  }{\kappa^2 }   .
	\end{align} 
	Note that since $\kappa<C_\kappa<\infty$, \Cref{eq:group lasso spacing 1} implies that 
\begin{align}\label{eq:group lasso spacing 2}
	\widehat \nu- \eta  \ge C_1C_{\kappa}^{-2} \s\log(pn)  .
	\end{align} 
	By assumption,
	\begin{align}\label{eq:group lasso spacing 3} \min \{ \eta -s ,e- \widehat \nu  \}\ge    \frac{ e-s }{10}  \ge \frac{\Delta}{20 }	 \ge   \frac{1 }{ 20}     C_{snr} C_{\kappa}^{-2}   \s \log(p n) .
	 \end{align} 
\
\\
\\
{\bf Step 1.}	In this step, it is shown that 
 with probability at least $1 - C(n  p)^{-5}$,
	\[
		\sum_{ i  = s  + 1}^{e }\| \widehat{\beta}_ i  - \beta^*_ i \|_2^2 \leq C_3 \s \lambda^2 .
	\]	
From \Cref{eq:group lasso algorithm},  it holds that
	\begin{align}\label{eq-lem19-pf-1}
		& \sum_{ i  = s + 1}^{e }   ( y_ i  - x_ i ^{\top}\widehat{\beta}_i) ^2 + \lambda  \sum_{ j = 1}^p \sqrt{\sum_{ i  = s  + 1}^{e } \bigl(\widehat{\beta}_ {i,j}  \bigr) ^2} \leq \sum_{  i  = s + 1}^{e}   ( y_{ i } - x _ i^{\top}\beta^*_ i   ) ^2 + \lambda  \sum_{ j = 1}^p \sqrt{\sum_{  i  = s + 1}^{e} \bigl(\beta^*_ {i,j}\bigr) ^2}.
	\end{align}
	Let $\delta_ i        = \widehat{\beta}_ i - \beta^*_ i $.  It holds that
	\[
		\sum_{  i = s + 1}^{e - 1}\mathbbm{1}\left\{\delta_i     \neq \delta  _{ i +1}\right\} = 2.
	\]
	\Cref{eq-lem19-pf-1} implies that
	\begin{align}\label{eq-lem19-pf-2}
		\sum_{  i = s + 1}^{e} \|\delta_i    ^{\top} x_i\|_2^2 + \lambda  \sum_{ j = 1}^p \sqrt{\sum_{ i  = s + 1}^{e} \bigl(\widehat{\beta}_ {i,j} \bigr) ^2}\leq 2\sum_{  i = s + 1}^{e} (y_i - x_i^{\top}\beta^*_ i )\delta_i    ^{\top} x_i + \lambda \sum_{ j = 1}^p \sqrt{\sum_{  i = s + 1}^{e} \bigl(\beta^*_ {i,j}\bigr) ^2}.
	\end{align}
\
\\
Note that since $S$ is the common support for $\beta_i^* $ for $i\in(s,e]$, it holds that 
	\begin{align}
		& \sum_{ j = 1}^p \sqrt{\sum_{  i = s + 1}^{e} \bigl(\beta^*_ {i,j}\bigr) ^2} - \sum_{ j = 1}^p \sqrt{\sum_{  i = s + 1}^{e} \bigl(\widehat{\beta}_{i,j} \bigr) ^2} \nonumber
		\\
		=&  \sum_{ j  \in S} \sqrt{\sum_{  i = s + 1}^{e} \bigl(\beta^*_ {i,j}\bigr) ^2} - \sum_{ j  \in S} \sqrt{\sum_{  i = s + 1}^{e} \bigl(\widehat{\beta}_{i,j} \bigr) ^2}  - \sum_{j \in S^c} \sqrt{\sum_{  i = s + 1}^{e} \bigl(\widehat{\beta}_{i,j} \bigr) ^2} \nonumber 
		\\
		\leq & \sum_{ j  \in S} \sqrt{\sum_{  i = s + 1}^{e} \bigl(\delta_{i,j}  \bigr)^2}  - \sum_{j \in S^c} \sqrt{\sum_{  i = s + 1}^{e} \bigl(\delta_{i,j}  \bigr)^2}.\label{eq-lem19-pf-3}
	\end{align}
	Note that for any $j\in[1, \ldots, p]$,  
  from \Cref{eq:group lasso spacing 2}, \Cref{eq:group lasso spacing 3} and \Cref{lemma:l infty bound for estimators}, it holds that 
	\begin{align}
	\label{eq:l infty bound in group lasso}\sup_{s< i\le e  } \frac{\delta _{i,j}}{  \sqrt { \sum_{  i = s + 1}^{e} \left(\delta_{i,j}\right)^2 }  } \le C_4\frac{1}{\sqrt { \s \log(pn) }} .
	\end{align}
As a  result, with probability at least $1 -  (n   p)^{-5}$ 
	\begin{align}
		& \left|\sum_{  i = s + 1}^{e} (y_i - x_i^{\top}\beta^*_ i )\delta_i    ^{\top} x_i\right| = \left|\sum_{  i = s + 1}^{e} \varepsilon_{ i } \delta_i    ^{\top} x_i\right| = 
		\bigg|  \sum_{ j = 1}^p \left\{\left ( \frac{\sum_{  i = s + 1}^{e} \varepsilon_{ i } \delta_{i,j} x_{i,j}}{\sqrt{\sum_{  i = s + 1}^{e} \left(\delta_{i,j}\right)^2}}\right)  \sqrt{\sum_{  i = s + 1}^{e} \left(\delta_{i,j}\right)^2}\right\} \bigg|  \nonumber \\
		\leq & \sup_{ j= 1, \ldots, p} \left|\frac{\sum_{  i = s + 1}^{e} \varepsilon_{  i  } \delta_{i,j} x_  {i,j} }{\sqrt{\sum_{  i = s + 1}^{e} \left(\delta_{i,j}\right)^2}}\right| \sum_{ j = 1}^p \sqrt{\sum_{  i = s + 1}^{e} \left(\delta_{i,j}\right)^2} 
		\leq  C_5  \sqrt {\log(pn) } \sum_{ j = 1}^p \sqrt{\sum_{  i = s + 1}^{e} \left(\delta_{i,j}\right)^2} \nonumber 
		\\
		  \leq&  (\lambda/4)\sum_{ j = 1}^p \sqrt{\sum_{  i = s + 1}^{e} \left(\delta_{i,j}\right)^2},  \label{eq-lem19-pf-4}
	\end{align}
	where the second inequality follows from \Cref{lem-wang-lem-3} and \Cref{eq:l infty bound in group lasso} if $C_5$ is a  sufficiently large constant, and the last inequality follows from $\lambda = C_\lambda \sqrt {\log(pn) } $.
\ 
\\
Combining \eqref{eq-lem19-pf-1}, \eqref{eq-lem19-pf-2}, \eqref{eq-lem19-pf-3} and \eqref{eq-lem19-pf-4} yields  
	\begin{equation}\label{eq-lem19-pf-5}
		\sum_{  i = s + 1}^{e} ( \delta_i    ^{\top} x_i ) ^2	 + \frac{ \lambda }{2}\sum_{j \in S^c} \sqrt{\sum_{  i = s + 1}^{e} 
 		\bigl(\delta_{i,j}  \bigr)^2} \leq \frac{3\lambda }{2}\sum_{ j  \in S} \sqrt{\sum_{  i = s + 1}^{e} \bigl(\delta_{i,j}  \bigr)^2}.
	\end{equation}
	\
	\\
{\bf Step 2.} To apply  restricted eigenvalue conditions,   let
	\[
	I= (s,e], \quad 	I_1 = (s, \eta ], \quad I_2 = (\eta , \widehat {\nu}], \quad I_3 = (\widehat {\nu}, e].
	\]
	Denote 
	$$\mathcal E _{I }: = \Bigg\{ v^\top  \bigg( \frac{1}{|I|} \sum_{i\in I }  x_ix_i^\top \bigg)   v \ge  \frac{1}{16  }   v^\top  \Sigma v  -C_r  \frac{\log(p) }{ |I| }\|v\|_1^2  \Bigg \}  .$$
	If     $C_1$ in \Cref{eq:group lasso spacing 2} and   $C_{snr} $ in \Cref{eq:group lasso spacing 3} are sufficiently large constants,  it holds that 
\begin{align}
\label{eq:group lasso spacing change point} 
\min\{|I_1|, \ |I_2| , \ |I_3|  \} \ge C_6 \s \log(np) 
\end{align}
for some sufficiently large $C_6$.  For $h=1,2,3$, denote 
$\mathcal E_{I_h }$ the same way as $\mathcal E _{I }$.
 So  by \Cref{theorem:restricted eigenvalues},
	 $P(  \mathcal E_{I_h }) \ge 1- (np)^{-5} $ for $h=1,2,3$.
	Denote 
	$\delta_{I_ h } = \delta_  i  $ for any $ i\in  I_ h $.
	With probability at least $1 - 3(n   p)^{-5}$, on the event $\bigcap_{ h = 1, 2, 3}\mathcal{E}_{I_h }$,
	\begin{align} \nonumber 
		& \sum_{  i = s + 1}^{e}  ( \delta_i    ^{\top} x_i ) ^2 = 
		\sum_{   h  = 1,2,3 }  \sum_{ i  \in I_ h }   ( \delta _{I_ h}^{\top}x_i ) ^2 
		  \\ \nonumber 
		\geq   
		& 
		\sum_{ h  = 1, 2, 3}  \frac{  |  I_ h| } { 16  }  \delta_{I_h  }^\top  \Sigma   \delta_{I_h }   - C _r \log(p) \|\delta _{I_ h }\|_1 ^2 
		 \\
		 \nonumber  
	=
		& 
		\sum_{ h  = 1, 2, 3} \frac{  |  I_ h| } { 16  }  \delta_{I_h  }^\top  \Sigma   \delta_{I_h }   - C_r   \log(p) \big(  \|\delta _{I_ h } (S)\|_1 +\|\delta _{I_ h } (S^c)\|_1 \big) ^2  \\ \nonumber 
			\ge 
		& 
		\sum_{ h  = 1, 2, 3} \frac{ c_x  |  I_ h| } { 16  }   \| \delta_{I_h  } \|_2^2   - 2 C_r  |S|   \log(p)   \|\delta _{I_ h } \|_2^2   -2 C_r   \log(p)   \|\delta _{I_ h } (S^c)\|_1 ^2    \\ 
		\geq  
		&
		\sum_{ h  = 1, 2, 3}\frac{ c_x  |  I_ h| } { 20  }   \| \delta_{I_h  } \|_2^2      -2 C_r   \log(p)   \|\delta _{I_ h } (S^c)\|_1 ^2   ,
		\label{eq:restricted eigenvalue group lasso 1}
	\end{align}
	where the last inequality follows from \eqref{eq:group lasso spacing change point} and $|S| \le 2\s$ and 
	$\| \delta_{I_h}(S) \|_1 = \sum_{j\in S } |\delta_{I_h,j} |$.
	\\
	\\
	{\bf Step 3.}
Note that
	\begin{align*}
		 \sqrt { 	\sum_{  h= 1, 2, 3}   \|\delta _{I_ h } (S^c)\|_1 ^2 }  = 
			& \sqrt{\sum_{ h = 1,2,3} \left(\sum_{j \in S^c} |\delta_{I_{h  } ,j }  |\right)^2 }  = \sqrt{\sum_{ h  = 1,2,3} \left(\sqrt{\frac{|I_ h |}{ |I_h |  }} \sum_{j \in S^c}|\delta_ {  I_ h , j}  | \right)^2}	 \\
			\le &  \min\{ | I_1|,  |I_2| , |I_3| \}  ^{-1/2 }\sqrt{\sum_{i  =s + 1 }^e \left(  \sum_{j \in S^c}|\delta_{i,j} | \right)^2}	 
		\\
		\leq &  \min\{ | I_1|,  |I_2| , |I_3| \}  ^{-1/2 }   \sum_{j \in S^c}\sqrt{\sum_{  i = s + 1}^{e} (\delta_{i,j})^2} 
		\\ 
		\leq &  3\min\{ | I_1|,  |I_2| , |I_3| \}  ^{-1/2 } \sum_{j \in S}   \sqrt{\sum_{  i = s + 1}^{e} (\delta_{i,j})^2} \\
		\leq & 3 \min\{ | I_1|,  |I_2| , |I_3| \}  ^{-1/2 }   \sqrt{ |S| \sum_{j \in S} \sum_{  i = s + 1}^{e} (\delta_{i,j})^2} 
		\\ 
		\leq 
		& \sqrt{ \frac{c_x}{200  C_r \log(  p)}} \sqrt{\sum_{  i = s + 1}^{e} \|\delta_i    \|_2^2} \quad ,
	\end{align*}
where the  second inequality follows from the generalized Minkowski's inequality,  the third inequality follows from \Cref{eq-lem19-pf-5}, which implies that 
$$  \sum_{j \in S^c} \sqrt{\sum_{  i = s + 1}^{e} \bigl(\delta_{i,j}  \bigr)^2} \leq 3\sum_{ j  \in S} \sqrt{\sum_{  i = s + 1}^{e} \bigl(\delta_{i,j}  \bigr)^2}, $$
and the last inequality follows from \Cref{eq:group lasso spacing change point}  for sufficiently large $C_6$.  
So 
\begin{align}
 \sqrt { 	\sum_{i = 1, 2, 3}   \|\delta _{I_ h } (S^c)\|_1 ^2 }   \le \sqrt{ \frac{c_x}{200  C_r \log(  p)}} \sqrt{\sum_{  i = s + 1}^{e} \|\delta_i    \|_2^2} \  .  \label{eq:restricted eigenvalue group lasso 2}
\end{align}
Therefore, for any 
	\begin{align*}
		\eqref{eq:restricted eigenvalue group lasso 1} = & 		\sum_{ h  = 1, 2, 3} \frac{ c_x  |  I_ h| } { 20  }   \| \delta_{I_h  } \|_2^2      -2 C_r   \log(p)   \|\delta _{I_ h } (S^c)\|_1 ^2     \\
	 =
	 & \sum  _{ i=s+1} ^ e  \frac{ c_x    } { 20  } \| \delta_{ i  } \|_2^2  -2 C_r   \log(p)   \|\delta _{I_ h } (S^c)\|_1 ^2   
	 \\
	 \ge & \sum  _{ i=s+1} ^ e  \frac{ c_x    } { 20  } \| \delta_{ i  } \|_2^2  -2 C_r   \log(p)  \frac{c_x}{200 C_r  \log(  p) } \sum_{  i = s + 1}^{e} \|\delta_i    \|_2^2 
	 \\
	 \ge &  \sum  _{ i=s+1} ^ e  \frac{ c_x    } { 25  } \| \delta_{ i  } \|_2^2 \ ,
	\end{align*}
	where the second last inequality follows from  \Cref{eq:restricted eigenvalue group lasso 2}.
	 \\
	 \\
	 {\bf Step 4.} Putting the previous steps together, it holds that 
	 \begin{align*}
	  & \sum  _{ i=s+1} ^ e  \frac{ c_x    } { 25  } \| \delta_{ i  } \|_2^2 + \frac{ \lambda }{2}\sum_{j \in S^c} \sqrt{\sum_{  i = s + 1}^{e} \bigl(\delta_{i,j}  \bigr)^2} 
	  \\ 
	  \le  
	  &  \sum_{  i = s + 1}^{e} ( \delta_i    ^{\top} x_i ) ^2	 + \frac{ \lambda }{2}\sum_{j \in S^c} \sqrt{\sum_{  i = s + 1}^{e} \bigl(\delta_{i,j}  \bigr)^2}
	  \\
	   \leq
	   &  \frac{3\lambda }{2}\sum_{ j  \in S} \sqrt{\sum_{  i = s + 1}^{e} \bigl(\delta_{i,j}  \bigr)^2}
	   \\
	   \le & \frac{3\lambda }{2} \sqrt{ |S| \sum_{ j  \in S} \sum_{  i = s + 1}^{e} \bigl(\delta_{i,j}  \bigr)^2} 
	   \\
	   \le &   3 \lambda \sqrt { \s}    \sqrt{   \sum_{  i = s + 1}^{e} \|   \delta_i   \|_2^2 }  \  . 
	 \end{align*}
	Therefore,
\begin{align}\label{eq:step 3 group lasso localization}
	 \sum_{i=s+1} ^e  \| \widehat \beta _i  - \beta^*_i  \|_2^2 =   \sum  _{ i=s+1} ^ e    \| \delta_{ i  } \|_2^2 
	   \le &  C _7  \lambda ^2  \s  \ . 
	 \end{align} 
\\
\\
 {\bf Step 5.}  From \Cref{eq:step 3 group lasso localization} and  the definition of $\widehat \beta_i$, it holds that 
\begin{align}\label{eq:group lasso deviation bound 1}
		\sum_{i=s+1} ^e  \| \widehat \beta _i  - \beta^*_i  \|_2^2  = |I_1| \|\beta^*_{\eta   } - \widehat{\alpha }_1\|_2^2 + |I_2| \|\beta^*_{\eta  + 1}  - \widehat{\alpha }_1\|_2^2 + |I_3| \|\beta^*_{\eta  + 1}  - \widehat{\alpha }_2\|_2^2  \le C _7  \lambda ^2  \s  .
	\end{align}
	Let 
	$$\kappa' = \| \beta^*_{\eta   }    - \beta^*_{\eta + 1    } \|_2. $$
	By \Cref{eq:group lasso spacing 3} 
	$$ \min\{ |I_1| , \ |I_3| \}   \ge     \frac{  \Delta}{20}. $$
	Therefore 
	it holds that 
	$$ \|\beta^*_{\eta   }   - \widehat{\alpha }_1\|_2^2 \le \frac{C_7 \lambda ^2  \s  }{|I_1| } \le  \frac{C_7 \lambda ^2  \s  }{ \frac{1}{20} \Delta  } \le  
\frac{C_7 C_\lambda ^2  \log(p n ) \s }{ \frac{1}{20}C_ {snr}    \log(pn)  \s /\kappa^2   }	\le 
\frac{1}{16 } \kappa^2 ,  $$
where  the first  inequality follows from \Cref{eq:group lasso deviation bound 1},   the third inequality follows from Assumption \ref{assume: LGS assumption} {\bf b}  and the last inequality holds for sufficiently large $C_ {snr} $. 
 Similarly 
 $$ \|\beta^*_{\eta  + 1 }    - \widehat{\alpha }_2\|_2^2  \le 
\frac{1}{16 } \kappa^2  .  $$  
	 So 
	 \begin{align*}
	  \|  \beta^*_{\eta + 1    }  - \widehat{\alpha }_1   \|_2 \ge \|\beta^*_{\eta  + 1   }  - \beta^*_{\eta   }    \|_2   - \|\beta^*_{\eta   }  - \widehat{\alpha }_1\|_2 \ge \kappa'  -  \frac{1}{4 }\kappa    \ge \kappa'  /2. 
	 \end{align*} 
 \Cref{eq:group lasso deviation bound 1} further implies that 
\begin{align*}
 (\widehat  \nu - \eta ) (\kappa') ^2 /4  \le  |I_2|  \|\beta^*_{\eta   }  - \widehat{\alpha }_1\|_2 ^2   \le C_7 \lambda^2 \s ,
\end{align*}   
which implies that 
$$ \widehat  \nu - \eta   \le  \frac{ 4 C_7 \lambda^2 \s }{  ( \kappa')   ^2 } ,  $$
as desired. 
\end{proof}

	     \begin{lemma} \label{lemma:group lasso estimation}
	 Suppose that $[s+1, e] \subset [1,n] $  is any interval  such that $ e-s\ge \frac{\Delta}{2}$ and  that $[s+1, e]$  contains exactly one change point $\eta$ which satisfies 
	$$\min \{ \eta-s, e-\eta  \} \ge \frac{e-s }{10}.$$   Suppose 
			\begin{align*} (  \widehat \alpha_1,  \widehat \alpha_2) =  \argmin_{  \alpha _1, \alpha _2 \in \mathbb{R}^{p}   }   \Bigg\{&\sum_{ i = s  + 1}^{ \nu } ( y_{ i } - x_i ^{\top} \alpha_1 \bigr )^2  
			  +  \sum_{ i  =  \nu  + 1}^{e  }  ( y_ i  - x^\top_i  \alpha _2  ) ^2  \\
		   +&   \lambda \sum_{ j  = 1}^p \sqrt{( \nu  - s )(\alpha _{1,j}) ^2 + (e -  \nu )(\alpha _{2,j})^2}\Bigg\}.
		   \end{align*}
If  in addition, 	  Assumptions \ref{assume: model assumption}-\ref{assume: LGS assumption} hold and that $\lambda = C_\lambda \sqrt {\log(pn) } $ for sufficiently large $C_\lambda$,
	     then  with probability at least $1-(pn)^{-4} $, it holds that 
	     $$ \big  \|  \widehat \alpha _1 - \beta^*_{(s, \nu] } \big\|_2 ^2 \le C\frac{\s \log(pn) }{\Delta} \quad \text{and} \quad 
	      \big  \|  \widehat \alpha _2 - \beta^*_{(\nu, e] } \big\|_2 ^2 \le C\frac{\s \log(pn) }{\Delta} ,$$
	      where 
	      $$  \beta^*_{(a, b] } = \frac{1}{b-a} \sum_{i=a+1}^b \beta_i^*.$$ 
	     \end{lemma}

\begin{proof}[Proof of \Cref{lemma:group lasso estimation}]
Let $S_1 $ be the support of $\beta_{\eta }^*$   and $S_2 $ be the support of $\beta_{\eta+ 1 }^*$.
Denote 
$$ S=S_1\cup S_2 .$$
Note that $|S|\le 2\s$ and that $S$ is the common support for $\beta_i^* $ for $i\in(s,e]$. 
Without loss of generality,  assume that $s  < \eta <  \nu   < e $.  
	Note that  
	\begin{align}
	\label{eq:group lasso estimation spacing 1}   \min\{ \nu-s, e-\nu\}> \frac{\Delta}{20}    \ge  \frac{1}{20}  C_{snr} C_{\kappa}^{-2}   \s \log(p n) . \end{align} 
For brevity, denote 
$$I_1 = (s,\nu]   ,   \quad   I_2= (\nu,e],  \quad  \alpha_1^* = \beta^*_{(s, \nu] } \quad \text{and} \quad \alpha_2^* = \beta^*_{ (\nu, e] }   \  . $$
\
\\
{\bf Step 1.}
From \Cref{eq:group lasso algorithm},  it holds that
	\begin{align}\begin{split}
		\sum_{ i  \in I_1 } ( y_{ i } - x_i ^{\top}  \widehat \alpha_1 \bigr )^2  
			  +  \sum_{i  \in I_2  }  ( y_ i  - x^\top_i   \widehat \alpha _2  ) ^2 
		   +   \lambda \sum_{ j  = 1}^p \sqrt{ |I_1| ( \widehat \alpha _{1,j}) ^2 + |I_2| ( \widehat \alpha _{2,j})^2} 
		   \\
		   \le  \sum_{ i \in   I_1  } ( y_{ i } - x_i ^{\top}   \alpha_1 ^*  \bigr )^2  
			  +  \sum_{ i   \in I_2  }  ( y_ i  - x^\top_i    \alpha _2  ^*  ) ^2 
		   +   \lambda \sum_{ j  = 1}^p \sqrt{ |I_1| (   \alpha _{1,j}   ^*  ) ^2 +  |I_2| (   \alpha ^*  _{2,j} )^2}   . 
		   \end{split}\label{eq:gle minimization}
	\end{align}
	Let $\phi _ i        = \widehat{\alpha }_ i - \alpha^*_ i $.  
	\Cref{eq:gle minimization} implies that
	\begin{align}  \nonumber 
		&\sum_{ i  \in I_1  } (  x_i ^{\top}  \phi _1 \bigr )^2  +\sum_{ i  \in I_2  } (  x_i ^{\top}  \phi _2 \bigr )^2 		 
		   +   \lambda \sum_{ j  = 1}^p \sqrt{ |I_1| ( \widehat \alpha _{1,j}) ^2 + |I_2| ( \widehat \alpha _{2,j})^2} 
		   \\ \nonumber  
		   \le  &  2 \sum_{ i  \in I_1  } ( y_i -x_i^\top \alpha_1^* ) x_i ^{\top}  \phi _1  +
		  2  \sum_{ i  \in I_2  } ( y_i -x_i^\top \alpha_2^* ) x_i ^{\top}  \phi _2 
		   +   \lambda \sum_{ j  = 1}^p \sqrt{  |I_1| (   \alpha _{1  ,  j} ^* ) ^2 +  |I_2| (   \alpha ^*  _{2,j} )^2}  
		   \\ \label{eq:gle minimization 2}
		   = & 2  \sum_{ i  \in I_1  } ( \beta_i^* -\alpha^*_1  ) x_i x_i ^{\top}  \phi _1  +
		  2  \sum_{ i  \in I_2  } (\beta_i^*-\alpha^*_2  ) x_i x_i ^{\top}  \phi _2  
		\\ \label{eq:gle minimization 3}
		  + &    2  \sum_{ i  \in I_1  }  \varepsilon_i  x_i ^{\top}  \phi _1  +
		   2 \sum_{ i  \in I_2  }   \varepsilon_i   x_i ^{\top}  \phi _2    
		   \\
		   + &  \lambda \sum_{ j  = 1}^p \sqrt{  |I_1| (   \alpha _{1  ,  j} ^* ) ^2 +  |I_2| (   \alpha ^*  _{2,j} )^2}    .
		  \nonumber 
	\end{align} 
\
\\
Note that   $S$ is the common support for $\alpha_1^* $ and $\alpha_2^*$  for $i\in(s,e]$. So 
\begin{align*} \nonumber 
&  \sum_{ j  = 1}^p \sqrt{ |I_1| (   \alpha _{1  ,  j} ^* ) ^2 +  |I_2| (   \alpha ^*  _{2,j} )^2}   
 -
\sum_{ j  = 1}^p \sqrt{   |I_1|  ( \widehat \alpha _{1,j}) ^2 +  |I_2 | ( \widehat \alpha _{2,j})^2}  
\\ \nonumber 
=&\sum_{j \in S }\sqrt{ |I_1|   (   \alpha _{1  ,  j} ^* ) ^2 + |I_2 |  (   \alpha ^*  _{2,j} )^2}      -\sum_{  j \in S }  \sqrt{   |I_1|  ( \widehat \alpha _{1,j}) ^2 +  |I_2 | ( \widehat \alpha _{2,j})^2}   -\sum_{ j \in S^c}  \sqrt{   |I_1|  ( \widehat \alpha _{1,j}) ^2 +  |I_2 | ( \widehat \alpha _{2,j})^2}  
\\ \nonumber 
\le  
&  \sum_{j \in S }\sqrt{ |I_1|   (   \phi _{1,j} ) ^2 + |I_2 |  (   \phi _{2  ,  j} )^2}       -\sum_{ j \in S^c}  \sqrt{   |I_1|  ( \widehat \alpha _{1,j}) ^2 +  |I_2 | ( \widehat \alpha _{2,j})^2}  
\\
= &  \sum_{j \in S }\sqrt{ |I_1|   (   \phi _{1,j} ) ^2 + |I_2 |  (   \phi _{2  ,  j} )^2}       -\sum_{ j \in S^c}  \sqrt{   |I_1|  ( \phi_{1,j}) ^2 +  |I_2 | ( \phi  _{2,j})^2}   . 
\end{align*}
{\bf Step 2.}	Note that for any $h=1,2$ and  $j\in[1,p]$,  
  from \Cref{eq:group lasso estimation spacing 1}, it holds that 
	\begin{align}
	\label{eq:l infty bound in group lasso estimation} \frac{\phi  _{h,j}}{  \sqrt { |I_1|   (   \phi _{1,j} ) ^2 + |I_2 |  (   \phi _{2  ,  j} )^2   }  } \le \frac{1}{\sqrt {|I_h| }} \le  C_4\frac{1}{\sqrt { \s \log(pn) }} .
	\end{align}
	\
	\\
As a  result, with probability at least $1 - C(n   p)^{-5}$
	\begin{align*} \nonumber 
		 & | \eqref{eq:gle minimization 3}| 		
 =   
		  \Bigg| 
		 \sum_{ j = 1}^p \Bigg\{      \frac{   \sum_{  i \in I_1} \varepsilon_{ i } \phi_{1,j} x_{i,j}     
		 +  \sum_{  i \in I_2 } \varepsilon_{ i } \phi_{2  ,  j} x_{i,j}  }    {\sqrt { |I_1|   (   \phi _{1,j} ) ^2 + |I_2 |  (   \phi _{2  ,  j} )^2   }   }     \Bigg\} \sqrt { |I_1|   (   \phi _{1,j} ) ^2 + |I_2 |  (   \phi _{2  ,  j} )^2   }       \Bigg| \nonumber \\
		\leq & \sup_{ j= 1, \ldots, p}  \Bigg|   \frac{   \sum_{  i \in I_1} \varepsilon_{ i } \phi_{1,j} x_{i,j}     +  \sum_{  i \in I_2 } \varepsilon_{ i } \phi_{2  ,  j} x_{i,j}  }    {\sqrt { |I_1|   (   \phi _{1,j} ) ^2 + |I_2 |  (   \phi _{2  ,  j} )^2   }   }     \Bigg |     \sum_{   j  =1}^p \sqrt { |I_1|   (   \phi _{1,j} ) ^2 + |I_2 |  (   \phi _{2  ,  j} )^2   }        \nonumber \\ 
		\leq &    C_5  \sqrt {\log(pn) }   \sum_{ j  =1}^p \sqrt { |I_1|   (   \phi _{1,j} ) ^2 + |I_2 |  (   \phi _{2  ,  j} )^2   }     
		  \le (\lambda/8)\sum_{ j = 1}^p    \sqrt { |I_1|   (   \phi _{1,j} ) ^2 + |I_2 |  (   \phi _{2  ,  j} )^2   },    
	\end{align*}
	where the second inequality follows from \Cref{lem-wang-lem-3} and \Cref{eq:l infty bound in group lasso estimation}, and the last inequality follows from $\lambda = C_\lambda \sqrt {\log(pn) } $.
\ 
\\
In addition,  since  $ s< \eta <\nu <e,$ it holds that
$$\alpha _2^*  = \frac{1}{ (e-\nu ) } \sum_{i  =\nu+1 }^e  \beta_i ^* = \beta _{\eta+1 } ^* .$$ 
As a result 
\begin{align} \nonumber  
	 \eqref{eq:gle minimization 2} =  & 	  \sum_{ i  \in I_1  }  (\beta_i^*-\alpha_1^* )^\top   x_i x_i ^{\top}  \phi _1  +
		   \sum_{ i  \in I_2  }  (\beta_i^*-\alpha_ 2^* )^\top    x_i x_i ^{\top}  \phi _2 		  
		   \\ \nonumber   
		   = & \sum_{ i  \in I_1  }  \bigg(\beta_i^*-   \frac{1}{|I_1|} \sum_{ {i'} \in I_1 } \beta_{i'} ^* \bigg  )^\top   x_i x_i ^{\top}  \phi _1 
		   \\ \nonumber  
		   \le &   \sup_{1\le j \le p }\Bigg|  \sum_{ i  \in I_1  }  \bigg(\beta_i^*-   \frac{1}{|I_1|} \sum_{ {i'} \in I_1 } \beta_{i'} ^* \bigg  )^\top   x_i x_{i,j}   \Bigg| \| \phi _1  \|_1 
		   \\ \nonumber  
		   = &  \sup_{1\le j \le p }\Bigg|  \frac{1}{\sqrt{ |I_1|}  }\sum_{ i  \in I_1  }  \bigg(\beta_i^*-   \frac{1}{|I_1|} \sum_{ {i'} \in I_1 } \beta_{i'} ^* \bigg  )^\top   x_i x_{i,j}   \Bigg|   \sqrt {|I_1| }\| \phi _1  \|_1  
		   \\ \label{eq:gle step 2 term 2}
		   \le &\sup_{1\le j \le p }\Bigg|  \frac{1}{\sqrt{ |I_1|}  }\sum_{ i  \in I_1  }  \bigg(\beta_i^*-   \frac{1}{|I_1|} \sum_{ {i'} \in I_1 } \beta_{i'} ^* \bigg  )^\top   x_i x_{i,j}   \Bigg|  \sum_{ j = 1}^p    \sqrt { |I_1|   (   \phi _{1,j} ) ^2 + |I_2 |  (   \phi _{2  ,  j} )^2   }   \quad  . 
	\end{align}  
For any $j \in [1,p]$, denote $\Sigma [,j] $ to be the $j$-th column of $\Sigma$. Then
$$    \sum_{ i  \in I_1  }  \bigg(\beta_i^*-   \frac{1}{|I_1|} \sum_{ {i'} \in I_1 } \beta_{i'} ^* \bigg  )^\top \Sigma [,j] = 0.$$
In addition, it is straightforward to see that 
$$ \sup_{ i \in I_1 } \bigg\|\beta_i^*-   \frac{1}{|I_1|} \sum_{ {i'} \in I_1 } \beta_{i'} ^*   \bigg\| \le C_\kappa.$$ 
So 
 $  \bigg( \beta_i^*-   \frac{1}{|I_1|} \sum_{ {i'} \in I_1 } \beta_{i'} ^* \bigg  )^\top     x_i x_{i,j}  $ 
is  a sub-exponential random variable with parameter $  C_x^2 C_\kappa$  for any $i\in I_1$.
As a result 
\begin{align*}
& P \Bigg( \Bigg|  \frac{1}{\sqrt{ |I_1|}  }  \sum_{ i  \in I_1  }  \bigg(\beta_i^*-   \frac{1}{|I_1|} \sum_{ {i'} \in I_1 } \beta_{i'} ^* \bigg  )^\top   x_i x_{i,j}   \Bigg| \ge \delta  \text{ for all } 1\le j \le p \Bigg) 
\\
= &  P \Bigg(  \Bigg|  \sum_{ i  \in I_1  }  \bigg(\beta_i^*-   \frac{1}{|I_1|} \sum_{ {i'} \in I_1 } \beta_{i'} ^* \bigg  )^\top    \big\{ x_i x_{i,j}    - \Sigma[,j]\big\}    \Bigg|   \ge \delta \sqrt {|I_1| } \text{ for all } 1\le j \le p  \Bigg) 
\\
\le &  p\exp \bigg(-c \min\{  \frac{\delta^2 }{   4C_x^2C_\kappa  } , \frac{\delta \sqrt{|I_1| }}{ 2C_x \sqrt{ C_\kappa} }\}\bigg) 
\le p \exp \bigg(-c'  \min\{  \delta^2  ,  \delta \sqrt{|I_1|}   \}\bigg), 
\end{align*} 
where the second to the last inequality follows from standard sub-exponential tail bounds. 
Since $|I_1| \ge  \frac{\Delta}{20 }    \ge \frac{1}{20}  C_{snr} C_{\kappa}^{-2}   \s \log(p n) , $ letting $\delta =C_\delta \sqrt {\log(p)}$ for sufficiently large  
constant $C_\delta$, it holds that with probability at least $1-(pn)^{-5}$,
$$ \sup_{1\le i \le p }\Bigg|  \frac{1}{\sqrt{ |I_1|}  }  \sum_{ i  \in I_1  }  \bigg(\beta_i^*-   \frac{1}{|I_1|} \sum_{ {i'} \in I_1 } \beta_{i'} ^* \bigg  )^\top   x_i x_{i,j}   \Bigg| \le C_\delta \sqrt {\log(p)} \le \frac{1}{8 }\lambda. $$ 
Therefore 
\eqref{eq:gle step 2 term 2} gives 
$$  \eqref{eq:gle minimization 2} \le(\lambda/8)\sum_{ j = 1}^p    \sqrt { |I_1|   (   \phi _{1,j} ) ^2 + |I_2 |  (   \phi _{2  ,  j} )^2   }    .  $$
\
\\
{\bf Step 3.} Combing the previous two steps  gives 
\begin{align*}
 		&\sum_{ i  \in I_1  } (  x_i ^{\top}  \phi _1 \bigr )^2  +\sum_{ i  \in I_2  } (  x_i ^{\top}  \phi _2 \bigr )^2  + 
 		\lambda \sum_{ j \in S^c}  \sqrt{   |I_1|  (  \phi  _{1,j}) ^2 +  |I_2 | (  \phi  _{2,j})^2}  
 		\\
 		\le &  \lambda \sum_{j \in S }\sqrt{ |I_1|   (   \phi _{1,j} ) ^2 + |I_2 |  (   \phi _{2  ,  j} )^2}   +  \frac{\lambda}{4}   \sum_{ j = 1}^p     \sqrt { |I_1|   (   \phi _{1,j} ) ^2 + |I_2 |  (   \phi _{2  ,  j} )^2   }  .
\end{align*}
This gives
\begin{align} \label{eq:gle step 4 1}
 		\sum_{ i  \in I_1  } (  x_i ^{\top}  \phi _1 \bigr )^2  +\sum_{ i  \in I_2  } (  x_i ^{\top}  \phi _2 \bigr )^2 	  + 
 		 \frac{\lambda}{2} \sum_{ j \in S^c}  \sqrt{   |I_1|  ( \phi  _{1,j}) ^2 +  |I_2 | ( \phi   _{2,j})^2}  
 		\le   \frac{3\lambda }{2}  \sum_{j \in S }\sqrt{ |I_1|   (   \phi _{1,j} ) ^2 + |I_2 |  (   \phi _{2  ,  j} )^2}   .
\end{align}
Using exactly the same argument as in {\bf Step 4} and { \bf Step 5} in the proof of \Cref{lemma:group lasso localization},
it can be shown that 
\begin{align}\label{eq:gle step 4 2}
 		\sum_{ i  \in I_1  } (  x_i ^{\top}  \phi _1 \bigr )^2  +\sum_{ i  \in I_2  } (  x_i ^{\top}  \phi _2 \bigr )^2 	  \ge 
 		\frac{c_x}{25}  \big( |I_1| \|\phi_1\|_2^2 +  |I_2|  \|  \phi _2 \|_2^2  \big) .
\end{align} 
Therefore 
\begin{align} \nonumber 
 		    |I_1| \|\phi_1\|_2^2 +  |I_2|  \|  \phi _2 \|_2^2   \le &
 		    C_6  \lambda   \sum_{j \in S }\sqrt{ |I_1|   (   \phi _{1,j} ) ^2 + |I_2 |  (   \phi _{2  ,  j} )^2}   
 		   \\\nonumber 
 		   \le & C_6\lambda \sqrt{ |S|  \sum_{j \in S } \big\{  |I_1|   (   \phi _{1,j} ) ^2 + |I_2 |  (   \phi _{2  ,  j} )^2 \big\} }    
 		   \\\nonumber 
 		  \le   & C_6\lambda\sqrt{ |S|    |I_1| \|\phi_1\|_2^2 +  |I_2|  \|  \phi _2 \|_2^2  }      
 		   \\    \nonumber  	 
 		   \le & 2C_6\lambda  \sqrt {\s}    \sqrt {  |I_1| \|\phi_1\|_2^2 +  |I_2|  \|  \phi _2 \|_2^2  }  \quad , 
\end{align} 
where the first inequality follows from \eqref{eq:gle step 4 1} and \eqref{eq:gle step 4 2}. This directly gives 
$$  |I_1| \|\phi_1\|_2^2 +  |I_2|  \|  \phi _2 \|_2^2   \le   8 C^2_6 \lambda ^2 \s .$$
The desired result follows from the assumption that  
 $$  \min\{ |I_1| ,   |I_2| \}>  \frac{1 }{20}   \Delta  .$$

 \end{proof}

\subsection{Additional Technical Lemmas}	       
	       
\begin{lemma} \label{lemma:covering}
Let $\mathcal{R }$ be any linear subspace in $\mathbb{R}^n$ and $\mathcal{N}_{1/4}$	be a $1/4$-net of $\mathcal{R } \cap B(0, 1)$, where $B(0, 1)$ is the unit ball in $\mathbb{R}^n$.  For any $u \in \mathbb{R}^n$, it holds that
	\[
		\sup_{v \in \mathcal{R } \cap B(0, 1)} \langle v, u \rangle \leq 2 \sup_{v \in \mathcal{N}_{1/4}} \langle v, u \rangle,
	\]
	where $\langle \cdot, \cdot \rangle$ denotes the inner product in $\mathbb{R}^n$.
\end{lemma}

\begin{proof}
Due to the definition of $\mathcal{N}_{1/4}$, it holds that for any $v \in \mathcal{R } \cap B(0, 1)$, there exists a $v_k \in \mathcal{N}_{1/4}$, such that $\|v - v_k\|_2 < 1/4$.  Therefore,
	\begin{align*}
		\langle v, u \rangle = \langle v - v_k + v_k, u \rangle = \langle x_k, u \rangle + \langle v_k, u \rangle \leq \frac{1}{4} \langle v, u \rangle + \frac{1}{4} \langle v^{\perp}, u \rangle + \langle v_k, u \rangle,
	\end{align*}
	where the inequality follows from  $x_k = v - v_k = \langle x_k, v \rangle v + \langle x_k, v^{\perp} \rangle v^{\perp}$.  Then we have
	\[
		\frac{3}{4}\langle v, u \rangle \leq \frac{1}{4} \langle v^{\perp}, u \rangle + \langle v_k, u \rangle.
	\]
	It follows from the same argument that 
	\[
		\frac{3}{4}\langle v^{\perp}, u \rangle \leq \frac{1}{4} \langle v, u \rangle + \langle v_l, u \rangle,
	\]
	where $v_l \in \mathcal{N}_{1/4}$ satisfies $\|v^{\perp} - v_l\|_2 < 1/4$.  Combining the previous two equation displays yields
	\[
		\langle v, u \rangle \leq 2 \sup_{v \in \mathcal{N}_{1/4}} \langle v, u \rangle.
	\]
 \end{proof}
 \
 \\
For any vector $v \in \mathbb R^m$,  denote  $ \mathcal D(v)$ to be the number of change point of $v $. That is,
$$\mathcal D(v) = \sum_{i=1}^m  \mathbbm 1\{  v_i \not = v_{i+1}\}. $$

\begin{lemma}\label{lem-wang-lem-3}
	For data generated according to   Model \ref{assume: model assumption}, for any interval $I = (s, e] \subset \{1, \ldots, n\}$, it holds that for any $\delta > 0$ and any  $ j  \in \{1, \ldots, p\}$, 
	\[
		\mathbb{P}\left\{\sup_{\substack{v \in \mathbb{R}^{(e-s)}   \\ \|v\|_2 = 1 ,   \mathcal D (v) =2  }}  \left|\sum_{ i  = s+1}^e v_ i \varepsilon_i  x_ i[j]\right| > \delta \right\} \leq Cn   \exp \left\{-c \min\left\{ \delta^2 , \, \frac{\delta}{  \|v\|_{\infty}}\right\}\right\}.
	\]
\end{lemma}

\begin{proof} This is a standard covering lemma. We provide a proof for completeness. 
For any $v \in \mathbb{R}^{(e-s)}$ satisfying $\mathcal D(v) =  2 $ and $\|v\|_2=1$,  let $\eta <\eta'  $ be the change points of $v$. Then there are $(e-s)(e-s-1)/2$ possible choice of $\eta, \eta' $. For any $\eta, \eta' $, denote 
$$  \mathcal R (\eta, \eta'  ) =\{ w \in  \mathbb{R}^{(e-s)} , w_1=\ldots=w_{\eta}\not =w _{\eta+1} =\ldots = w_{\eta'} \not = w_{\eta'+1}= \ldots =w_{(e-s)}\}$$
Then $\mathcal R (\eta, \eta'  ) $ is a 3-dimensional subspace. Denote $ \mathcal N_{1/4} (\eta, \eta' )$ to be  the covering number a  the unit ball in 
$\mathcal R (\eta, \eta'  ) $. Then   $ \mathcal N_{1/4} (\eta, \eta' ) \le 9^2. $
  Therefore we have,
	\begin{align*}
		& \mathbb{P}\left\{\sup_{\substack{v \in \mathbb{R}^{(e-s)}   \\ \|v\|_2 = 1 ,   \mathcal D (v) = 2  }}  \left|\sum_{   i  = s+1}^e v_ i \varepsilon_i   x _ i [j]\right| > \delta \right\} \\
		\leq &  \frac{(e-s)(e-s-1) }{2} 9^{2} \sup_{ \eta\in(s,e], \  v \in \mathcal{N}_{1/4} (\eta,\eta')  }	\mathbb{P}\left\{\left|\sum_{ i = s+1}^e v_ i \varepsilon_ i  x_ i[j] \right| > \delta/2 \right\} \\
		\leq & Cn^2 \exp \left\{-c' \min\left\{\frac{\delta^2}{4C_x^2}, \, \frac{\delta}{2C_x \|v\|_{\infty}}\right\}\right\} \\
		\leq & Cn^2 \exp \left\{-c  \min\left\{ \delta^2 , \, \frac{\delta}{  \|v\|_{\infty}}\right\}\right\} ,
	\end{align*}
where the first inequality follows from \Cref{lemma:covering} and union bounds,  the second last inequality holds because for any fixed $v$, $v_i\varepsilon_i x_{i,j}$ is a sub-Exponential random variable with parameter bounded by $C_x.$
\end{proof}
	       
	       \begin{lemma}\label{lemma:l infty bound for estimators}
	       Suppose 
	       $$ v= ( \underbrace  {a, \ldots, a}_{K_1 },\underbrace  {b, \ldots, b}_{K_2 }, \underbrace  {c, \ldots, c}_{K_3 }) $$
	       and that $v\not= 0$.
	       Then 
	       $$ \bigg\| \frac{v }{\|v\|_2 }\bigg\|_\infty \le \frac{1}{ \sqrt { \min \{ K_1, K_2, K_3  \} }  } .$$
	       \end{lemma}
	       \begin{proof}
	       It suffices to show $a/\|v\|_2  \le \frac{1}{ \sqrt { \min \{ K_1, K_2, K_3  \} }  } .  $
	       If $a=0$, then this trivially holds. Otherwise
	       $$ \frac{a  }{\|v\|_2 }  =\frac{a} { \sqrt {a^2  K_1   +b ^2 K_2   +c ^2 K_3 }  } \le \frac{1}{\sqrt {K_1}}. $$
	       \end{proof}
	   \begin{theorem}
\label{theorem:restricted eigenvalues}
Suppose  $\{x_{i } \}_{1\le i \le n } \overset{i.i.d.} {\sim}  N_p (0, \Sigma )  $.  Let 
$ \widehat \Sigma =  \frac{1}{n} \sum_{i=1}^n x_ix_i^\top .$
Then there exists constants $c$ and $C$ such that for all $v\in \mathbb R^p $,
\begin{align*}
 v^\top  \widehat \Sigma v \ge \frac{1}{16}   v^\top  \Sigma v  -C_r  \frac{\log(p) }{n}\|v\|_1^2 
\end{align*}
with probability at least $1-\exp(-cn). $
\end{theorem}  
  \begin{proof}
  This is the well known restricted eigenvalue condition.  The proof cam be found in \cite{raskutti2010restricted}.
  \end{proof}

\section{Proofs Related to \Cref{theorem:vpcusum} }	       
 
For any univariate time series $\{ z_i\}_{i=1}^n $ and 	any $ 1\le s< t < e<n $, denote the CUSUM statistics as 
\begin{align*}
	\widetilde Z^{s ,e }_{t}  =  \sqrt \frac{ e-t  }{(e-s) (t-s)  }  \sum_{i=s+1}^t z_i  -  \sqrt \frac{   t-s  }{(e-s)(e-t )   }  \sum_{i=t+1}^e  z_i .
\end{align*}

\begin{proof} [Proof of \Cref{theorem:vpcusum}] Throughout the proof,  assume that event  
$\mathcal A ( \{y_i^{(2) } ,  x_i^{(2) } \}_{i=1}^n  ,   \{ u_m \}_{m=1}^M ,\xi = C_1 \sqrt { (\N + 1)\log(n) } )$   in \Cref{eq:event A}, event  
$ \mathcal B ( \{y_i^{(2) } ,  x_i^{(2) } \}_{i=1}^n  ,   \{ u_m \}_{m=1}^M ,  \xi = C_1 \sqrt { (\N + 1)\log(n) }  )$   in \Cref{eq:event B}, event $\mathcal M$ in \Cref{eq:event M} and the good event in \Cref{corollary:property of projection} (with data  $\{y_i^{(1) } ,  x_i^{(1) } \}_{i=1}^n$) hold. 
Denote 
$$\delta_k  = \frac{C ( \N+1)  \log(n)}{\kappa_k^2 }  \quad \text{and} 
\quad \delta_{\max  } = \frac{C ( \N+1)  \log(n)}{\kappa ^2   } .    $$
Since $\delta _k$ is  the desired   localization rate, by induction, it suffices to consider any generic $(s, e] \subset (0, n]$ that satisfies the following three conditions:
	\begin{align*}
		&\eta_{r-1} \le s\le \eta_r \le \ldots\le \eta_{r+q} \le e \le \eta_{r+q+1}, \quad q\ge -1; \\
		& \text{ either }   \eta_r-s\le \delta_r  \quad \text{or} \quad   s-\eta_{r-1} \le  \delta_{r-1};
		\\
		& \text{ either }  \eta_{r+q+1}-e \le \delta_{r+q +1 } \quad  \text{or}\quad   e-\eta_{r+q} \le \delta _{r+q}  .
	\end{align*}
	Here $q = -1$ indicates that there is no change point contained in $(s, e]$.

Observe  that under Assumption \ref{assume: LGS assumption}, for sufficiently large constant $ C_{snr}$, it holds that $\delta_{\max  }  < \Delta /4$.  Therefore, it has to be the case that for any true  change point $\eta_r\in (0, n]$, either $|\eta_r  -s |\le \delta_{r}  $ or $|\eta_r -s| \ge \Delta - \delta_{\max  }     \geq  \Delta /4 $. This means that $ \min\{ |\eta_r-e|, |\eta_r-s| \}\le \delta_{r}  $ indicates that $\eta_r$ is a detected change point in the previous induction step, even if $\eta_r\in (s, e]$.  We refer to $\eta_r\in (s,e]$ as an undetected change point if $ \min\{ \eta_r -s, \eta_r-e\} \ge  \Delta/4 $.

To complete the induction step, it suffices to show that VPWBS$( (s, e] , \{ (a_m,b _m)\}_{m=1}^M  ,\lambda, \tau , \zeta  )$   
\\
{\bf  (i)} will not detect any new change
point in $(s,e ]$ if
all the change points in that interval have been previous detected, and
\\
{\bf	(ii) }will find a point $D_{m*}$ in $(s,e]$ such that $|\eta_r-D_{m*}|\le \delta_r$ if there exists at least one undetected change point in $(s, e)$.
\\
\\
Let 
     $ \{\widehat{\alpha}_1 ^m , \widehat{\alpha }_2^m , \widehat{\nu }^m  \}  $  
    be the output of  $ \text{ LGS } ( \{  y_ i^{(1) } , x_i^{(1) }   \}_{ i = 1}^{ n } ,  (a _m,b _m) ,\lambda   ) $, 
     and  $$ u _  m  =   \frac{ \widehat{\alpha }_2^m -\widehat{\alpha}_1 ^m   }{\| \widehat{\alpha }_2^m -\widehat{\alpha}_1 ^m \| _2 }  \in \mathbb R^p \quad $$  for all  $1\le m \le M$ .  
	Since the intervals $\{(a_m,b_m)\}_{m=1}^M$ are sampled 
	independently from the  data, the rest
	of the argument is made on the event $\mathcal M$, which is defined in \Cref{eq:event M}   and  this event  has no effects on  the distribution of the data.
\\
\\
 {\bf Step 1.} 
Denote 
$$ f_i (u_m) =  E \{ z_i(u_m ) \}      \quad \text{and}  \quad  \widetilde f_t^{s_m, e_m} (u_m)   = E \{ \widetilde Z_t ^{a_m, b_m} (u_m)   \}   .$$ 
Note that 
$$  f_i (u_m) = u_m \Sigma \beta_i . $$ 
On the event $\mathcal M$, for any $\eta_k\in (0, n]$, without loss of
	generality, there exists \begin{align}
	\label{eq:spacing of vpcusum}a_k\in
	[\eta_k-3\Delta/4,\eta_k-\Delta/2] \quad 
	\text{and} 
	\quad b _k \in
	[\eta_k+\Delta/2,\eta_k+3\Delta/4].
	\end{align}   
 In this step, it is shown that   for each $k \in [1,\ldots,K]$, it holds that
	\begin{align} \label{eq:properties of vectors}
		\max_{  a_k  +\zeta  \le t \le b_k  -\zeta   }|    \widetilde f_t^{a_k , b _k }(u_ k )    | \ge  \frac{7c_x}{32} \sqrt {\Delta}   \kappa_k   ,
	\end{align}
	where
 $$u_k =      \frac{ \widehat{\alpha }_2^ k  -\widehat{\alpha}_1 ^k   } {\| \widehat{\alpha }_2^k -\widehat{\alpha}_1 ^k  \| _2}  , $$	 and 
	 $$ \{\widehat{\alpha}_1 ^ k  , \widehat{\alpha }_2^ k  , \widehat{\nu }^ k   \}  =  \text{ LGS } ( \{  y_ i^{(1) }  , x_i^{(1) }  \}_{ i = 1}^{ n } ,  (a _ k ,b _k ) ,\lambda   ). $$ 
	 
	By \Cref{eq:spacing of vpcusum}, $[a_k, b_k]$
	contains exactly one change point $\eta_k$.  Since $f_i(u_k ) $ is a    one dimensional population  time series, 
	it holds that 
\begin{align}\label{eq:one change point 1d cusum}
		\widetilde f_t^{a_k , b _k } (u_ k )    =
			\begin{cases}
				   \sqrt {\frac {t-a _k}{(b _k-a _k)(b _k-t)} }( b _k-\eta_k)    u_k ^\top\Sigma   ( \beta _{\eta_k} ^*  - \beta _{\eta_{k-1}}^* ), & a_m  <   t\le \eta_k, \\
				    \sqrt {\frac { b _k-t}{(b _k- a _k)(t-a  _k)} }(\eta_k-a _k)   u_k ^\top   \Sigma   ( \beta _{\eta_k}^*  - \beta _{\eta_{k-1}}^* ) , & \eta_k <  t\le b_m .
			\end{cases}
	\end{align}
	\\
	\\
	Let    $$ \{\widehat{\alpha}_1 ^ k  , \widehat{\alpha }_2^ k  , \widehat{\nu }^k   \}  \leftarrow  \text{ LGS } ( \{ y_i^{(1)} , x_i^{(1)} \}_{ i = 1}^{ n } ,  (a _ k ,b _k ) ,\lambda   )  . $$ 
	From \Cref{corollary:property of projection}, with probability at least $ 1 -n^{-5}$, it holds that 
	$$\big  \|   ( \widehat \alpha _1^ k   - \widehat \alpha _2 ^ k    )  - (\beta _{\eta_k} ^*  - \beta _{\eta_{k-1}}^* )  \big\|_2 \le
	 \frac{c_x  }{32 C_x }  \| \beta _{\eta_k} ^*  - \beta _{\eta_{k-1}}^*  \| _2 . $$  
	Since $ c_x <C_x$ by definition, it holds that 
	$$  \frac{32}{33  } \le \frac{ \|  \beta _{\eta_k} ^*  - \beta _{\eta_{k-1}}^* \| _2 }{ \|\widehat \alpha _1 - \widehat \alpha _2  \|  _2 }  \le \frac{32}{31} .$$
\
\\
As a result, 
\begin{align}\nonumber 
& u_k ^\top  \Sigma   ( \beta _{\eta_k} ^*  - \beta _{\eta_{k-1}}^* )\\
\nonumber 
 =
& \frac { ( \widehat \alpha _1 - \widehat \alpha _2 )^\top  } { \| \widehat \alpha _1 - \widehat \alpha _2 \| _2 }  \Sigma   ( \beta _{\eta_k} ^*  - \beta _{\eta_{k-1}}^* ) 
 \\\nonumber 
 =  &  \frac { ( \beta _{\eta_k} ^*  - \beta _{\eta_{k-1}}^* )^\top  } { \| \beta _{\eta_k} ^*  - \beta _{\eta_{k-1}}^*  \| _2 }  \Sigma   ( \beta _{\eta_k} ^*  - \beta _{\eta_{k-1}}^* )   +  
 \bigg(  \frac { ( \widehat \alpha _1 - \widehat \alpha _2 )^\top  } { \| \widehat \alpha _1 - \widehat \alpha _2 \| _2 } -\frac { ( \beta _{\eta_k} ^*  - \beta _{\eta_{k-1}}^* )^\top  } { \| \beta _{\eta_k} ^*  - \beta _{\eta_{k-1}}^*  \| _2 }  \bigg)   \Sigma   ( \beta _{\eta_k} ^*  - \beta _{\eta_{k-1}}^* )  
 \\\nonumber 
 \ge &   \frac { ( \beta _{\eta_k} ^*  - \beta _{\eta_{k-1}}^* )^\top  } { \| \beta _{\eta_k} ^*  - \beta _{\eta_{k-1}}^*  \| _2 }  \Sigma   ( \beta _{\eta_k} ^*  - \beta _{\eta_{k-1}}^* )    -   2  \frac{  \| \widehat \alpha _1 - \widehat \alpha _2  - ( \beta _{\eta_k} ^*  - \beta _{\eta_{k-1}}^* )  \|  \| \Sigma\| _{\op } \| \beta _{\eta_k} ^*  - \beta _{\eta_{k-1}}^*  \|_2^2     }{ \| \widehat \alpha _1 - \widehat \alpha _2 \| _2  \| \beta _{\eta_k} ^*  - \beta _{\eta_{k-1}}^*  \| _2  }
 \\ 
 \ge 
 & c_x \|\beta _{\eta_k} ^*  - \beta _{\eta_{k-1}}^*  \| _2  - 3 C_x \frac{c_x}{32 C_x }   \|\beta _{\eta_k} ^*  - \beta _{\eta_{k-1}}^*  \| _2 
 \ge \frac{7}{8}c_x  \|\beta _{\eta_k} ^*  - \beta _{\eta_{k-1}}^*  \| _2  . \label{eq:lower bound signal vpcusm}
\end{align}	
	Therefore \Cref{eq:one change point 1d cusum} gives 
	\begin{align*}
		| \widetilde f_{\eta_k }^{a_k , b _k }  (u_k)   | = \sqrt { \frac{  ( \eta_k-a_k)(b_k-\eta_k ) } {b_k-a_k }} \bigg| u_k ^\top  \Sigma   ( \beta _{\eta_k} ^*  - \beta _{\eta_{k-1}}^* ) \bigg| 
	\ge \frac{1}{4} \sqrt {\Delta}   \frac{7}{8}c_x  \|\beta _{\eta_k} ^*  - \beta _{\eta_{k-1}}^*  \| _2 = 
	\frac{7c_x}{32} \sqrt {\Delta}      \kappa_k   ,
	\end{align*} 
	where the last inequality follows from \Cref{eq:spacing of vpcusum} and \Cref{eq:lower bound signal vpcusm}.    Under  Assumption \ref{assume: model assumption} and Assumption \ref{assume: LGS assumption}, for sufficiently large constant $ C_{snr}$, it holds that $\zeta  \le \Delta /4$. Therefore 
	\Cref{eq:spacing of vpcusum} also implies that 
	$ \eta_k \in [ a_ k  +\zeta, b_k -\zeta ]$
	 $$\max_{  a_ k +\zeta  \le t \le b_k  -\zeta   }  | \widetilde f_t^{a_k , b _k } (u_k )    |  \ge | \widetilde f_{\eta_k }^{a_k , b _k }  (u_k )   |  \ge \frac{7c_x}{32} \sqrt {\Delta}      \kappa_k . $$
	This directly gives \Cref{eq:properties of vectors}.

\noindent{\bf Step 2.}
In this step, we will show that   VPWBS$( (s, e] , \{ (a_m,b _m)\}_{m=1}^M  ,\lambda, \tau , \zeta  )$ 
 consistently detect or reject the existence of undetected
change points within $(s, e]$.
 \\
\\
Let $a_m, b_m$ and  $m^*$ be defined as in VPWBS$( (s, e] , \{ (a_m,b _m)\}_{m=1}^M  ,\lambda, \tau , \zeta )$.  Denote $z_i(u_m)= 
(u_m ) ^{\top}
x_i ^{(2) } y_i^{(2) } $ and  $f_i(u_m) =  E \{ z_i(u_m)\} = u_m \Sigma \beta_i^*  $.  Let  $ \widetilde
Z_{t}^{s,e}(u_m)$ and  $\widetilde f_{t}^{s,e} (u_m)$ be the CUSUM statistics of the time series  $z_i(u_m) $ and $f _i(u_m)$, 
respectively.
\\
\\
Suppose there exists a change point $\eta_r\in (s, e ]$ such that $ \min\{ \eta_r
-s, e-\eta_r\} \ge 3\Delta/4 $.    Then, on the
event $\mathcal M$, there exists an interval $(a_k,b_k]$ selected by
VPWBS such that $a_k  \in [\eta_r-3\Delta/4, \eta_r -\Delta/2] $ and  $ b_k \in [\eta_r+\Delta/2, \eta_r +3\Delta/4]$.
Then $[a_k, b_k] \subset [s,e] $ and so  
$$  ( s_k , e_k ] =  ( a_k ,b_ k ]\cap  (s,e] =( a_k , b_ k  ].$$
Since  \Cref{eq:properties of vectors}  in {\bf Step 1} holds for $(a_k, b_k]$,
 we have that 
		\begin{align*}
			 A_ k &  =\max_{  a_k   +\zeta  \le t \le b_ k -\zeta   }  | \widetilde Z_t^{a_k , b _k } (u_ k)    |  \\
			 & \ge \max_{  a_ k +\zeta  \le t \le b_ k -\zeta   }  | \widetilde f_t^{a_k , b _k } (u_ k)    |    -C_1  \sqrt { (\N + 1)\log(n) }  \\
			 & \ge   \frac{7c_x}{32} \sqrt {\Delta} \kappa_r  -  C_1  \sqrt { (\N + 1)\log(n) } 
			 \\
			 & \ge \frac{7c_x}{64}  \sqrt {\Delta} \kappa_r  \ , 
		\end{align*}
	where the first inequality holds on the event 
	$\mathcal A ( \{y_i  ^{(2) } ,  x_i  ^{(2) } \}_{i=1}^n  ,   \{ u_m \}_{m=1}^M ,\xi = C_1 \sqrt { (\N + 1)\log(n) } ) $, the second inequality follows from  \Cref{eq:properties of vectors}, 
	and the last inequality follows from  Assumption \ref{assume: LGS assumption} with sufficiently large constant $C_{snr}$.   Thus for any
	undetected change point $\eta_r$ within $(s, e]$, it holds that
		\begin{align}
			A_{m^*}   = \sup_{1\le m\le  M} A_m \ge A_k  \ge c'   \sqrt {\Delta} \kappa_r       \label{eq:wbsrp size of population}.
		\end{align}
		By  Assumption \ref{assume: LGS assumption} with sufficiently large constant $C_{snr}$,  \Cref{eq:wbsrp size of population} gives 
	 	\begin{align*}
			A_{m^*}    \ge  c'   \sqrt {\Delta} \kappa_r   >  C_\tau \sqrt {  (\N +1) \log(n)} =\tau    . 
		\end{align*}
\
\\
As a result, VPWBS$( (s, e] , \{ (a_m,b _m)\}_{m=1}^M  ,\lambda, \tau , \zeta    )$    correctly
accepts the existence of undetected change points. 
\\
\\
Suppose there does not exist any undetected change points in $(s, e]$. Then for any $(s_m , e_m  ] = (a_m, b_m] \cap (s, e]$, one of the following situations must hold.
	\begin{itemize}
	\item [(a)]	There is no change point within $ (s_m , e_m  ]$;
	\item [(b)] there exists only one change point $\eta_{r} $ within $(s_m, e_m]$ and $\min \{ \eta_{r}-s_m  , e_m -\eta_{r}\}\le \delta_r $; 
	\item [(c)] there exist two change points $\eta_{r} ,\eta_{r+1}$ within $(s_m, e_m]$ and 
	$$  \eta_{r}-s_m \le \delta_r  \quad \text{and} \quad    e_m -\eta_{r+1}  \le \delta_ {r+1}  .$$
	\end{itemize}
The calculations of (c) is provided as the other two cases are similar and simpler.  Note that for any $\|u_m  \|_2 =1$, 
it holds that 
$$ | f _{\eta_{r+1} }  (u_m  )   - f _{\eta_{r+1} +1 } (u_ m )  |  =  |u_m^\top  \Sigma (\beta_{\eta_{r+1}} ^*  -\beta_{\eta_{r+1} + 1} ^*  ) |
\le \| u_  m  \|_2 \| \Sigma\|_\op \|  \beta_{\eta_{r+1}} ^*  -\beta_{\eta_{r+1} + 1} ^* \|\le C_x \kappa_{r+1}  $$
and similarly 
$$ | f _{\eta_{r } }  (u_ m   )   - f _{\eta_{r } +1 }  (u_m )  |   \le C_x \kappa_{r }. $$
By \Cref{lemma:cusum boundary bound} and the assumption that $(s_m ,e _m]$ contains only two change points, it holds that
\begin{align*}\nonumber 
			\max_{s_m \leq t \leq e_m }  |\widetilde{f}^{s_m, e_m }_t (u_m )  | \leq & \sqrt{e_m  - \eta_{r+1}}  | f _{\eta_{r+1} }  (u_m )   - f _{\eta_{r+1} +1 } (u_m ) |  + \sqrt{\eta_r - s_m }  |  f _{\eta_{r } }  (u_m )   - f _{\eta_{r } +1 }  (u_  m   ) | 
 \\
	 \le   & C_2  \sqrt  { \delta_{r+1} }  \kappa_{r+1 } +  C_2\sqrt { \delta_{r }}  \kappa_{r  } \le  C_3 \sqrt {(\N +1 )\log(n) } .
	 \end{align*}
 Therefore  under event $\mathcal A ( \{y_i,  x_i\}_{i=1}^n  ,   \{ u_m \}_{m=1}^M ,\xi = C_1 \sqrt { (\N + 1)\log(n) } ) $,
 $$A_ m : =   \max_{  s_m   +\zeta  \le t \le e_m  -\zeta   }  | \widetilde Z_t^{s_m , e_m  } (u_  m )    | \le  \max_{s_m  +\zeta \leq t \leq e_m -\zeta }  |\widetilde{f}^{s_m, e_m }_t (u_m)  | + C_1 \sqrt {(\N +1) \log(n) } \le C_4 \sqrt {(\N +1) \log(n) } . $$
 So if $ \tau = C_\tau \sqrt {(\N +1) \log(n) }$ for sufficiently large $C_\tau$, it holds that 
 $$A_ m \le  \tau \quad \text{for all } 1\le m  \le M.$$
 As a result, VPWBS$( (s, e] , \{ (a_m,b _m)\}_{m=1}^M  ,\lambda, \tau , \zeta  )$    correctly
reject  if $(s,e]$ contains no undetected change points.

\noindent {\bf Step 3.} 
Assume that there exists a change point $\eta_r\in (s, e]$ such that 
$$ \min\{ \eta_r -s, \eta_r-e\} \ge 3\Delta/4.$$  Let $a_m, b_m$ and $m^*$ be defined as in  VPWBS$( (s, e] , \{ (a_m,b _m)\}_{m=1}^M  ,\lambda, \tau , \zeta  )$. 
\\
\\
To complete the  induction,  it suffices to show that,   there exists a change point $\eta_k\in  (s_{m*},e_{m*}]$ such that 
$ \min\{ \eta_k -s, \eta_k-e\} \ge 3\Delta/4 $ and $|D_{m*}-\eta_k|\le \delta_k$.
\\
\\
Consider the univariate time series $$ z_i (u_{m*} )  = ( u   _  m  ^\top     x_i ^{(2) })  y_i ^{(2) }   , \quad  
f _i  (u_{m*})  =E\{z _i (u_{m*}   ) \}  \quad \text{ for all } 1\le i \le n .$$ Since the
    collection of the change points of the time series $\{f_{i} (u_{m*}
)\}_{i=s_{m*} +1 }^ {e_{m*}}$ is a subset of that of $\{\eta_{k}\}_{k=0}^{K+1}\cap
[s,e]$, we may apply \Cref{lemma:error bound 1d} to the time series $\{z_{i}
(u_{m*})\}_{i=s_{m*}+1 }^{e_{m*}}$ and $\{f_{i} (u_{m*})\}_{i=s_{m*}+1 }^{e_{m*}}$. 
Therefore, it suffices to justify that all the assumptions of 
\Cref{lemma:error bound 1d} hold. 
\\
\\
Let $\zeta =C_\zeta (\N +1) \log(n)$ and $\xi  = C_1 \sqrt { (\N +1) \log(n) }$.  
Observe that  from {\bf Step 2} 
\Cref{eq:wbsrp size of population}, it holds that  
\begin{align*}
			A_{m^*}    \ge c'   \sqrt {\Delta} \kappa_r       \label{eq:wbsrp size of population}.
		\end{align*}
for all $r$ such that $ \min\{ \eta_r
-s, e-\eta_r\} \ge 3\Delta/4 $. So \Cref{eq:wbs size of sample} holds.  
\Cref{eq:wbs noise 1} and  \Cref{eq:wbs noise 2}  are direct consequences of 
$\mathcal A ( \{y_i ^{(2) },  x_i ^{(2) }\}_{i=1}^n  ,   \{ u_m \}_{m=1}^M ,\xi = C_1 \sqrt { (\N + 1)\log(n) } )$  and 
$ \mathcal B ( \{y_i ^{(2) },  x_i ^{(2) }\}_{i=1}^n  ,   \{ u_m \}_{m=1}^M ,  \xi = C_1 \sqrt { (\N + 1)\log(n) }  )$. \Cref{eq:wbs noise} is a direct consequence of Assumptions \ref{assume: model assumption} and \ref{assume: LGS assumption}.
 \\
 \\
Thus, all the conditions in \Cref{lemma:error bound 1d} are met, and we therefore
conclude that there exists a change point $\eta_{k}$ of $\{f_i( u_{m*})\}_{i=s_{m^*}+1}^{e_{m^*}} $, satisfying
\begin{equation}
\min \{e_{m^*}-\eta_k,\eta_k-s_{m^*}\}    >  \Delta /4  , \label{eq:coro wbsrp 1d re1}
\end{equation}
and
\[
		| D_{m*}-\eta_{k}|\le  \max \{ C_3\xi ^2 \kappa_k ^{-2}
	    ,\zeta  \} \le C (\N +1 ) \log(n)\kappa_k ^{-2},
	    \]
	where the last inequality holds because  
	$$ C (\N +1 ) \log(n)\kappa_k ^{-2}\ge C (\N +1 ) \log(n)C_\kappa^{-2} \ge C_\zeta (\N +1 ) \log(n) =\zeta$$
for sufficiently large $C $.
Observe that \\
{\bf i)} The change points
of $\{f_i(u_{m^*})\}_{i=s+1}^e $ belong to $(s, e]\cap \{ \eta_k\}_{k=1}^K$; and
\\
{\bf ii)} \Cref{eq:coro wbsrp 1d re1}  and  $(s_{m^*}, e_{m^*} ]  \subset (s, e]$ imply that
	\[
	\min \{e-\eta_k,\eta_k-s\}  >  \Delta /4  > \frac{C(\N +1) \log(n) }{\kappa^2 } = \delta_{\max }.
	\]
	As discussed in the argument before {\bf Step 1}, this implies that
	$\eta_k $ must be an undetected change point of  $\{\beta_i^* \}_{i=1}^n $.
\end{proof}

	  \subsection{Additional Technical Lemmas}
	  
	 Let  $\{ a_m\}_{m=1}^M,\{b _m\}_{m=1}^M$ be two sequences independently selected at random from $\{1, \ldots, n \}$, and   
	\begin{equation}\label{eq:event M}
		\mathcal{M} =   \bigl\{  \text{For each } k\in \{1,\ldots, K \},   \text{there exist   one } m\in \{ 1, \ldots, M \} \text{ such that } a_m \in \mathcal{S}_k, b_m \in \mathcal{E}_k \bigr\}, 
	\end{equation}
	where $\mathcal S_{k}= [\eta_k-3\Delta/4, \eta_k-\Delta/2 ]$ and $\mathcal
	E_{k}= [\eta_k+\Delta/2, \eta_k+3\Delta/4 ]$.  In the following lemma below, we give a lower bound on the probability of $\mathcal{M}$.   
\begin{lemma}\label{lemma:random interval}
	For the event $\mathcal{M}$ defined in \eqref{eq:event M}, we have
	\[ 
			\mathbb{P}(\mathcal M) \geq 1 -\exp\left\{\log\left(\frac{n}{\Delta}\right) - \frac{M\Delta^2}{16 n^2} \right\}.
		\]
\end{lemma}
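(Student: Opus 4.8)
The plan is to establish $\mathcal M$ by a union-bound over the $K$ change points, exploiting the independence of the $2M$ sampled endpoints. First I would fix a single change point index $k$ and a single interval index $m$, and lower bound the probability that $(\alpha_m,\gamma_m)$ \emph{captures} $\eta_k$, meaning $\alpha_m\in\mathcal S_k$ and $\gamma_m\in\mathcal E_k$. Since $\mathcal S_k=[\eta_k-3\Delta/4,\eta_k-\Delta/2]$ and $\mathcal E_k=[\eta_k+\Delta/2,\eta_k+3\Delta/4]$ each have length $\Delta/4$, each contains at least $\Delta/4$ integers of $\{1,\dots,n\}$, so $\mathbb P(\alpha_m\in\mathcal S_k)\ge \Delta/(4n)$ and $\mathbb P(\gamma_m\in\mathcal E_k)\ge \Delta/(4n)$. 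Because $\alpha_m$ and $\gamma_m$ are sampled independently and uniformly, the capture probability factorizes, giving $p_k:=\mathbb P(\alpha_m\in\mathcal S_k,\ \gamma_m\in\mathcal E_k)\ge \Delta^2/(16n^2)$, a bound uniform in both $k$ and $m$.

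Next I would use independence across the $M$ sampled intervals. The event that \emph{no} interval captures $\eta_k$ is the intersection over $m=1,\dots,M$ of the independent per-interval failure events, so its probability equals $(1-p_k)^M$. Applying the elementary inequality $1-x\le e^{-x}$ yields $(1-p_k)^M\le \exp(-Mp_k)\le \exp\{-M\Delta^2/(16n^2)\}$. A union bound over the $K$ change points then gives $\mathbb P(\mathcal M^c)\le \sum_{k=1}^K (1-p_k)^M\le K\exp\{-M\Delta^2/(16n^2)\}$.

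Finally I would convert the prefactor $K$ into the stated logarithmic term. Since the $K$ change points lie in $[1,n]$ and are separated by at least $\Delta$ under the minimal-spacing assumption, we have $K\le n/\Delta$. Substituting this bound gives $\mathbb P(\mathcal M^c)\le (n/\Delta)\exp\{-M\Delta^2/(16n^2)\}=\exp\{\log(n/\Delta)-M\Delta^2/(16n^2)\}$, which is precisely the complement of the claimed inequality.

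I do not anticipate a genuine obstacle: this is an entirely standard WBS-style interval-sampling estimate. The only points requiring mild care are the integer counting that guarantees $|\mathcal S_k|,|\mathcal E_k|\ge \Delta/4$ (so that the constant $16$ in the exponent comes out exactly) and the explicit use of the independence of $\alpha_m$ and $\gamma_m$, which is what makes the capture probability factorize into the product $\Delta^2/(16n^2)$.
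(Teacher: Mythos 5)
Your proposal is correct and follows essentially the same argument as the paper: a union bound over the $K$ change points, independence across the $M$ intervals giving per-change-point failure probability $(1-\Delta^2/(16n^2))^M$, and the bound $K\le n/\Delta$, with $1-x\le e^{-x}$ converting to the stated exponential form. The paper's proof is a one-line version of exactly this; your write-up just makes the factorization of the capture probability and the interval-length counting explicit.
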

	
\begin{proof}
	Since the number of change points are bounded by $n/\Delta $,
		\begin{align*}
			\mathbb{P}\bigl(\mathcal{M}^c\bigr) \leq & \sum_{k=1}^K \prod_{m =1}^M \bigl\{1 - \mathbb{P}\bigl( a _m \in \mathcal{S}_k,  b _m \in \mathcal{E}_k\bigr)\bigr\} 
			\\
				 \leq & K  (1-\Delta^2/(16n ^2))^M \leq ( n /\Delta)  (1 - \Delta^2/(16 n ^2))^M 
				 \\
				 \le & \exp\left\{\log\left(\frac{n}{\Delta}\right) - \frac{M\Delta^2}{16 n^2} \right\} .
		\end{align*}
\end{proof}

	  \subsubsection{Univariate CUSUM Statistics}
We  introduce some notations for one dimensional change point detection and the corresponding CUSUM statistics. Let 
 $\{z _i\}_{i=1}^n, \{f_i\}_{i=1}^n \subset \mathbb R$ be two univariate sequences. We will make the following assumptions.
 
\begin{assumption}[Univariate mean change points]\label{assume:model 1d}
	 Let $\{\eta_k\}_{k=0}^{K+1} \subset \{0, \ldots, n\}$, where $\eta_0=0 $ and $\eta_{K+1}=n$, and 
	$$ f_{\eta_{k-1} +1} = f_{\eta_{k-1}+2} =\ldots = f_{\eta_{k}} \quad \text{for all} \quad 1\le k\le K+1,$$
	Assume
	\begin{align*}
		& \min_{k = 1, \ldots, K+1} ( \eta_k-\eta_{k-1} ) \ge  \Delta > 0,\\
		& 0< |f_{\eta_{k+1}} - f_{\eta_{k}} |:  = \kappa_{k}   \text{ for all }  k = 1, \ldots, K.
	\end{align*}
\end{assumption}

 We also have the corresponding CUSUM statistics over any
 generic interval $[s,e]\subset [1,T]$ defined as
	\begin{align*}
	\widetilde Z_{t}^{s,e} &=\sqrt{\frac{e-t}{(e-s) (t-s)}}\sum_{i=s+1}^{t}z_i- \sqrt{\frac{t-s}{(e-s) (e-t)}} \sum_{i=t+1}^{e} z_i ,\\
	\widetilde f_{t}^{s,e} &=\sqrt{\frac{e-t}{(e-s) (t-s)}}\sum_{i=s+1}^{t}f_i- \sqrt{\frac{t-s}{(e-s) (e-t)}} \sum_{i=t+1}^{e} f_i.
	\end{align*}
Throughout this  section, all of our results are proven by
regarding $\{Z_i\}_{i=1}^T$ and  $\{f_i\}_{i=1}^T$ as two deterministic sequences.
We will frequently assume that $\widetilde f_{t}^{s,e}$ is a good approximation
of $\widetilde Z _{t}^{s,e}$ in ways that we will specify through appropriate
 assumptions.

\begin{lemma}\label{lemma:error bound 1d}
Suppose  Assumption \ref{assume:model 1d} holds.  Let  $[s_0,e_0]$ be an interval with $e_0-s_0\le
C_R\Delta$ and contain at lest one change point $\eta_r$ such that 
\[
\eta_{r-1} \le s_0\le \eta_r \le \ldots\le \eta_{r+q} \le e_0 \le \eta_{r+q+1},
\quad q\ge 0.
\]
 Suppose  that $\min\{ \eta_{p'} -s_0 , e_0 -\eta_{p'} \}\ge \Delta /16$ for
 some $p'$ and let $\kse= \max\{\kappa_p: \min\{ \eta_p -s_0 , e_0 -\eta_p \} \ge \Delta /16\}$.    Let  $[s,e] \subset [s_0,e_0]$ be any generic intervals. 
and  
$$b \in \arg \max_{s < t < e}|\widetilde Z _{t}^{s,e}  | .$$
For some $c_1>0$, $\lambda>0$ and $\delta>0$, suppose that
\begin{align}
&|\widetilde Z  _{b}^{s,e}  |  \ge c_1 \kse \sqrt{\Delta},  \label{eq:wbs size of sample} \\
&\sup_{s +\zeta  \le  t \le e- \zeta } |\widetilde Z _{t}^{s,e}   - \widetilde f_{t}^{s,e} | \le \xi, \quad \label{eq:wbs noise 1}   \text{and}   \\
&\sup_{s_1 < t < e_1} \frac{1}{\sqrt{e_1-s_1}}\left| \sum_{t=s_1+1}^{e_1} (     z _t
-f_t )\right| \le  \xi   \label{eq:wbs noise 2}  \quad \text{for every} \quad
e_1-s_1 \ge \zeta . 
\end{align}
If there exists a sufficiently small $c_2 > 0$ such that
\begin{equation}\label{eq:wbs noise}
\xi \le c_2\kse\sqrt \Delta \quad \text{and} \quad  \zeta  \le c_2\Delta,
\end{equation}
then there exists a change point $\eta_{k} \in (s, e)$  such that 
\[
    \min \{e-\eta_k,\eta_k-s\}  >  \Delta /4  \quad \text{and} \quad 
|\eta_{k} -b |\le \min \{C_3\xi ^2\kappa_k^{-2},\zeta \}.
\]
\end{lemma}
\begin{proof}
This is  Lemma 22 in \cite{wang2017optimal}.
\end{proof} 

\begin{lemma}\label{lemma:cusum boundary bound}
		If $[s, e]$ contain two and only two change points $\eta_r$ and $\eta_{r+1}$, then
		\[
			\sup_{s\leq t \leq e} \left|\widetilde{f}^{s, e}_t\right| \leq \sqrt{e - \eta_{r+1}} \kappa_{r+1} + \sqrt{\eta_r - s} \kappa_r.
		\]
		
\end{lemma}
 \begin{proof}
This is  Lemma 24 in \cite{wang2021optimal}.
\end{proof} 
\subsubsection{Projected CUSUM Statistics}
Given a collection of deterministic vectors $\{ u_m \}_{m=1}^M \in \mathbb R^p  $,  denote 
 $$z_{i  } ( u  _  m  )  =      u   _  m  ^\top     x_i   y_i     \in \mathbb  R\quad \text{ for } 1\le m \le M   \text{ and } 1\le i\le n.   $$
Let  $\widetilde Z^{s ,e }_{t}  (u_m) $ denote the corresponding one-dimensional CUSUM statistics.  
That is  
\begin{align*}
	\widetilde Z^{s ,e }_{t} (u_m)   =  \sqrt \frac{ e-t  }{(e-s) (t-s)  }  \sum_{i=s+1}^t z_i  (u_m) -  \sqrt \frac{   t-s  }{(e-s)(e-t )   }  \sum_{i=t+1}^e  z_i  (u_m)  .
\end{align*}  
Consider the following events 
\begin{align}\label{eq:event A} 
 \mathcal A ( \{y_i,  x_i\}_{i=1}^n  , & \{ u_m \}_{m=1}^M , \xi )  
\\ \nonumber 
=&\Bigg\{ \sup_{1\le m \le M } \sup_{ 0 \le s < t <e \le n }  |  \widetilde Z^{s ,e }_{t} (u_m)   - E (\widetilde Z^{s ,e }_{t} (u_m) )  |\ge \xi , 
\min \{ t-s , e-t \}\ge (\N+1)  \log(n)   \Bigg \} ;
\\\label{eq:event B} 
 \mathcal B ( \{y_i,  x_i\}_{i=1}^n  , & \{ u_m \}_{m=1}^M , \xi )  
\\ \nonumber 
=&\Bigg\{ \sup_{1\le m \le M } \sup_{ 0 \le s < t <e \le n }  \bigg  |   \frac{  \sum_{i=s+1}^e  \{  z _{i} (u_m)   - E (z _{i} (u_m)   ) \} }{\sqrt {e-s }}   \bigg |\ge \xi , 
\min \{ e-s  \}\ge (\N+1)  \log(n)   \Bigg \} .
\end{align}

\begin{lemma}[Deviation Bounds for Variance-Projected CUSUM statistics]
Suppose Assumption \ref{assume: model assumption} {\bf a} holds. Let $\{ u_m\}_{m=1}^M  $ be  a collection of vectors in $\mathbb R^p$ such that 
$ \| u_m\|_2 = 1 $ for  all $m$.  Then there exists an absolute constants $C_1 $ and $C_2$ such that 
\begin{align*}
&P \big    [    \mathcal A ( \{y_i,  x_i\}_{i=1}^n  ,   \{ u_m \}_{m=1}^M , C_1 \sqrt { (\N + 1)\log(n) }  )     \big ] \ge 1- CMn^{-3} , \text{ and } 
\\
&P \big    [    \mathcal B ( \{y_i,  x_i\}_{i=1}^n  ,   \{ u_m \}_{m=1}^M , C_1 \sqrt { (\N + 1) \log(n) }  )     \big ] \ge 1- CMn^{-3} .
\end{align*}

\end{lemma}
\begin{proof}
The deviation bounds can be established by standard sub-Exponential tail bounds. The analysis for   the event $\mathcal A$ will be provided, as the analysis for event $\mathcal B$ is exactly the same.
\\
\\
{\bf Step 1.}
Note that 
\begin{align*}   \widetilde Z^{s ,e }_{t} (u_m)   - E (\widetilde Z^{s ,e }_{t} (u_m) 
=
 \sum_{i=s+1}^e   b_i     \Big [   u   _  m  ^\top     x_i   y_i   - E\{   u   _  m  ^\top     x_i   y_i  \}  \Big] \ , 
\end{align*} 
where 
$$b_i = \begin{cases}
\sqrt { \frac{e-t }{ (e-s) (t-s )} }  & \text{when}  s+1\le i \le t,
\\
- \sqrt { \frac{t-s  }{ (e-s) (e- t )} } & \text{when}  t+1\le i \le e.
\end{cases} $$ 
 \
 \\
Note that
 $    u   _  m  ^\top     x_i   y_i   = u_m^\top  x_i ( x_i^\top \beta_i  ^* + \varepsilon_i)$ where  $x_i^\top \beta_i  ^*   +\varepsilon_i$ is centered Gaussian with
 $$ Var( x_i^\top \beta_i^*   + \varepsilon_i)  = ( \beta_i ^* ) ^\top  \Sigma \beta_i ^*  + \sigma_\varepsilon^2 \le  \N  C_x +  \sigma_\varepsilon^2 , $$  and $u_m x_i $ is  centered Gaussian with 
with $$Var(u_m ^\top x_i) = u_m ^\top  \Sigma  u_m  \le C_x , $$
where $\|u_m\|_2^2=1$ is used in the last inequality. 
So 
$ u   _  m  ^\top     x_i   y_i   $ is sub-Exponential with parameter $\N  C_x^2  +\sigma_\varepsilon^2C_x   $.  
In addition, note that 
$$ \sum_{i=s+1}^e b_i^2 = 1 \quad \text{and} \quad  |b_i | \le   ( \N \log(n) )^{-1/2} .$$
So by sub-Exponential tail bound, 
it holds that 
$$ P  \bigg(  \bigg| \sum_{i=s+1}^e   b_i     \Big [   u   _  m  ^\top     x_i   y_i   - E\{   u   _  m  ^\top     x_i   y_i  \}  \Big]   \bigg| \ge \delta  \bigg) \le 2 \exp\bigg(  -c \min\bigg \{  \frac{\delta^2 }{ \N C_x  ^2 +   C_x   \sigma_\varepsilon^2  } ,  \frac{ \delta \sqrt {  (\N+1)  \log(n) } }{ \sqrt { \N  C_x  ^2 +   C_x \sigma_\varepsilon^2 }  }\bigg\}  \bigg)  .$$
So by picking $\delta = C_\delta \sqrt { (\N + 1) \log(n)   }  $ for  sufficiently large constant $C_\delta$, it holds that with probability at most 
$ 1-n^{-6}$,  
$$  \bigg|  \widetilde Z^{s ,e }_{t} (u_m)   - E (\widetilde Z^{s ,e }_{t} (u_m)  \bigg| =  \bigg| \sum_{i=s+1}^e   b_i     \Big [   u   _  m  ^\top     x_i   y_i   - E\{   u   _  m  ^\top     x_i   y_i  \}  \Big]   \bigg| \ge C_\delta \sqrt { (\N + 1) \log(n)   }  . $$
Since there are at most $n^2$ possible choice for $(s,e] \subset (0,n]  $, 
a straightforward   union bound argument  shows that 
$$P \big    [    \mathcal A ( \{y_i,  x_i\}_{i=1}^n  ,   \{ u_m \}_{m=1}^M , C_1 \sqrt {(\N + 1)  \log(n) }  )     \big ] \ge 1- CMn^{-3}.$$
\end{proof}

\begin{center}
	\begin{figure}[h]
		\includegraphics[scale=0.45]{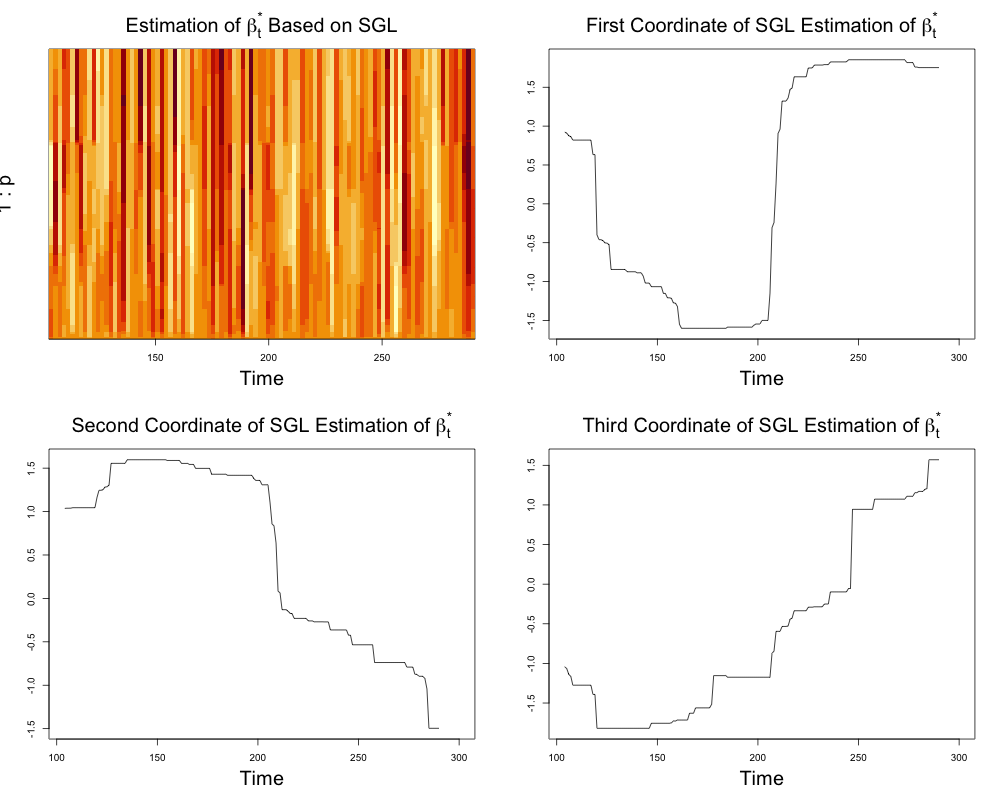} 
		\caption{Plots of SGL estimator $\{\widehat \beta_t\}_{t=1}^n$ in \eqref{eq:SGL full}. The data $\{x_t,y_t\}_{t=1}^n$ is the same as the one used to generate the illustration of VPWBS in Figure \ref{figure:projected}, where we have $n=300, p=100$ and that two change-points  are  at $\eta_1=100$ and $\eta_2=200$ with change size $\kappa= 1.6\sqrt {40}$. For better comparison with \Cref{figure:projected}, we plot the estimated $\widehat \beta_t$ for $t=105,\cdots,290$. The true coefficient $\{\beta_t^*\}_{t=105}^{290}$ contains a single change-point at $\eta_2=200$.} \label{fig:sgl}
	\end{figure} 
\end{center}

\end{document}